\newcommand{\pp}{\mathcal{P}}
\newcommand{\mm}{\mathcal{M}}
\newcommand{\E}{\mathbb{E}}
\newcommand{\simiid}{\,{\buildrel \text{iid} \over \sim\,}}
\DeclareMathOperator{\Exp}{Exp}
\newtheorem{theo}{Theorem}
\newtheorem{lemma}[theo]{Lemma}
\newtheorem{coro}[theo]{Corollary}
\newtheorem{proc}{Procedure}
\newcommand{\RR}{\mathbb{R}}
\newcommand{\ff}{\mathcal{F}}
\newcommand{\nn}{\mathcal{N}}
\newcommand{\EE}[2][]{\mathbb{E}_{#1}\left[#2\right]}
\newcommand{\PP}[2][]{\mathbb{P}_{#1}\left[#2\right]}
\newcommand{\eqd}{\,{\buildrel d \over =}\,}
\newcommand{\hk}{\hat{k}}
\newcommand{\hl}{\hat{l}}
\newcommand{\hatt}{\hat{t}}
\newcommand{\hbeta}{\hat{\beta}}
\newcommand{\tp}{\tilde{p}}
\newcommand{\tq}{\tilde{q}}
\newcommand{\tY}{\widetilde{Y}}
\newcommand{\tZ}{\widetilde{Z}}
\newcommand{\with}{\text{ with }}
\newcommand{\eqand}{\text{ and }}
\newcommand{\hkF}{\hk_F}
\newcommand{\hkS}{\hk_S}
\newcommand{\hkT}{\hk_T}
\newcommand{\inc}{\text{inc}}
\newcommand{\comp}{\text{comp}}
\newcommand{\FM}{\text{FM}}
\author{Max Grazier G'Sell}
\address{Department of Statistics, Carnegie Mellon University, Pittsburgh, USA.}
\author{Stefan Wager}
\address{Department of Statistics, Stanford University, Stanford, USA.}
\author{Alexandra Chouldechova}
\address{Heinz College, Carnegie Mellon University, Pittsburgh, USA.}
\author{Robert Tibshirani}
\address{Departments of Health Research \& Policy, and Statistics, Stanford University, Stanford, USA.}
\email{mgsell@cmu.edu; swager@stanford.edu}
\title[Sequential FDR Control]{Sequential Selection Procedures and \\ False Discovery Rate Control}
\date{\today}
\begin{document}

\maketitle

\begin{abstract}

We consider a multiple hypothesis testing setting where the hypotheses are
ordered and one is only permitted to reject an initial contiguous block,
$H_1,\,\dots,H_k$, of hypotheses.  A rejection rule in this setting amounts to a
procedure for choosing the stopping point $k$.  This setting is inspired by the
sequential nature of many model selection problems, where choosing a stopping
point or a model is equivalent to rejecting all hypotheses up to that point and
none thereafter.  We propose two new testing procedures, and prove that they
control the false discovery rate in the ordered testing setting.  We also show
how the methods can be applied to model selection using recent results on
$p$-values in sequential model selection settings.  

\end{abstract}

\keywords{multiple hypothesis testing, stopping rule, false discovery rate, sequential testing}

\section{Introduction}\label{section:intro}

Suppose that we have a sequence of null hypotheses, $H_1$, $H_2, \ldots H_m$, and that
we want to to reject some hypotheses while controlling the False
Discovery Rate \citep[FDR,][]{benjamini1995controlling}.
Moreover, suppose that these hypotheses
must be rejected in an ordered fashion: a test procedure must reject
hypotheses $H_1, \, \dots, \, H_k$ for some $k \in \{0,1, \,\dots, \,m\}$.
Classical methods for FDR control, such as the original Benjamini-Hochberg selection procedure,
are ruled out by the requirement that the hypotheses be rejected in order.

In this paper we introduce new testing procedures that address this problem, and control the False
Discovery Rate (FDR) in the ordered setting.
Suppose that we have a sequence of $p$-values, $p_1$, ..., $p_m \in [0,1]$ corresponding
to the hypotheses $H_j$, such that $p_j$ is uniformly distributed on $[0,1]$ when $H_j$ is true.
Our proposed methods start by transforming
the sequence of $p$-values $p_1$, ..., $p_m$ into a monotone increasing sequence of
statistics $0 \leq q_1 \leq \ldots \leq q_m \leq 1$. We then prove that we achieve
ordered FDR control by applying the original Benjamini-Hochberg procedure on the
monotone test statistics $q_i$. 

\subsection{Variable Selection along a Regression Path}
\label{sec:path}

This problem of FDR control for ordered hypotheses arises naturally when
implementing variable selection using a path-based a path-based regression algorithm; examples of such algorithms include
forward stepwise regression \citep[see][for a review]{hocking1976biometrics}
and least-angle regression \citep{LARS}. These methods 
build models by adding in variables one-by-one, and the number of
non-zero variables in the final model only depends on a single
sparsity-controlling tuning parameter.
The lasso \citep{lasso} can also be used for
path-based variable selection; however, the lasso also sometimes
removes variables from its active set while building its model.

Each time we add a new variable to the model, we may want to
ask---heuristically---whether adding the new variable to the model
is a ``good idea''. Because the path algorithm specifies the order in which
variables must be added to the model, asking these questions yields
a sequence of ordered hypotheses for which it is desirable to control the
overall FDR.

To fix notation, suppose that we have data $X \in \RR^{n \times p}$ and
$Y \in \RR^n$, and seek to fit the linear regression model
$$ Y \sim \nn\left(X \beta ^*, \, \sigma^2 I_{p \times p}\right) $$
using a sparse weight vector $\hbeta$. Path algorithms can then be
seen as providing us with an ordering of the variables
$ j_1, \, j_2, \, ... \in \{1,\, ..., \, p\} $
along with a sequence of nested models
$$ \emptyset = \mm_0 \subset \mm_1 \subset ... \subset \mm_p, \; \text{with} \; \mm_k = \{j_1, \, ..., \, j_k\}. $$
The statistician then needs to pick one of the models $\mm_k$, and
set to zero all coordinates $\hbeta_j$ with $j \notin \mm_k$.
The $k$-th ordered hypothesis $H_k$ tests whether
or not adding the $k$-th variable $j_k$ was informative. 

The null hypothesis $H_k$ that adding the $k$-th variable
along the regression path was uninformative can be formalized
in several ways.
\begin{itemize}
\item {\bf The Incremental Null:} In the spirit of the classical
AIC \citep{akaike1974new} and BIC \citep{schwarz1978estimating} procedures,
$H_k$ measures whether model $\mm_k$ improves over $\mm_{k-1}$.
In the case of linear regression, the null hypothesis states that the best regression
fit for model $\mm_{k-1}$ is the same as the best regression fit for $\mm_k$ or,
more formally:
\begin{align}
\label{eq:incremental}
&H_k^{\inc}: \pp_{\mm_{k - 1}} X \beta^* = 
\pp_{\mm_{k}} X \beta^*, \; \text{where} \\
&\pp_{\mm} = X_{\mm} \left(X_{\mm}^\top X_{\mm}\right)^{\dagger} X_{\mm}
\end{align}
is a projection onto the column-span of $X_\mm$. Here, we
write $X_{\mm}$ for the matrix comprised of the columns of $X$ contained
in $\mm$, and $A^\dagger$ denotes the Moore-Penrose pseudoinverse of 
a matrix A. \citet{taylor2014post} develop tests for $H_k^\inc$ in the context of
both forward stepwise regression and least-angle regression.
\item {\bf The Complete Null:} We may also want to test the stronger null hypothesis
that the model $\mm_{k - 1}$ already captures all the available signal. More specifically,
writing $\mm^*$ for the support set of $\beta^*$, we define
\begin{equation}
\label{eq:complete}
H_k^\comp: \mm^* \subseteq \mm_{k - 1}.
\end{equation} 
Tests of $H_k^\comp$ for various pathwise regression models
have been studied by, among others, \citet{lockhart2013significance},
\citet{fithian2014optimal}, \citet{loftus2014significance}, and \citet{taylor2013tests}.
\item {\bf The Full-Model Null:} Perhaps the simplest pathwise hypothesis we may want
to test is that
\begin{equation}
\label{eq:abs}
H_k^\FM: j_k \in \mm^*,
\end{equation}
i.e., that the $k$-th variable added to the regression path belongs to the support set of
$\beta^*$. Despite its simple appearance, however, the hypothesis $H_k^\FM$ is
difficult to work with. The problem is that the truth of $H_k^\FM$ depends critically on variables
that may not be contained in $\mm_k$, and so $H_k^\FM$ will have a ``high-dimensional''
character even when $k$ is small. We are not aware of any general methods for testing
$H_k^\FM$ along, for example, the least-angle regression path, and do not pursue this
formalization further in this paper.
\end{itemize}
The incremental and complete null hypotheses may both be appropriate in different
contexts, depending on the needs of the statistician. An advantage of testing $H_k^\inc$
is that it seeks parsimonious models where most non-zero variables are useful. On the
other hand, $H_k^\comp$ has the advantage that, unlike with $H_k^\inc$, subsequent
hypotheses are nested; this can make interpretation easier. We note that, when $X$ has
full column rank: 
\begin{align*}
  H_k^\comp = \bigwedge_{l = k}^p H_k^\inc.
\end{align*}

The goal of this paper is to develop generic FDR control procedures for ordered hypotheses,
that can be used for pathwise variable selection regardless of a statistician's
choice of fitting procedure (forward stepwise or least-angle regression),
null hypothesis ($H_k^\inc$ or $H_k^\comp$), and test statistic.
The flexibility of our approach should be a major asset, as the proliferation
of methods for pathwise hypothesis testing suggests an interest in the topic
\citep{lockhart2013significance,loftus2014significance,fithian2014optimal,
gsell2013adaptive,lee2013exact,lee2014exact,taylor2013tests,taylor2014post}.

\begin{table}
\caption{\label{tab:ex1}Typical realization of $p$-values for $H_k^\inc$ with least-angle regression (LARS), as
proposed by \citet{taylor2014post}.  }
\centering
\begin{tabular}{|l|rrrrrrrrrr|}
  \hline
LARS step&1&2&3&4&5&6&7&8&9&10\\
\hline
Predictor&3 &1 &4 &10 &9 &8 &5 &2 &6 &7\\
$p$-value&0.00 & 0.08 &0.34 &0.15& 0.93& 0.12& 0.64& 0.25& 0.49 & $\cdot$\\
\hline
\end{tabular}
\end{table}

{\bf Example.} To further illustrate our setup, consider a simple model selection problem.
We have $n$ observations from a  linear model with $p$ predictors,
\begin{equation}
\label{eqn:linreg}
y_i=\beta_0 +\sum_{j=1}^p x_{ij}\beta_j+  Z_i \with Z_i \sim \nn(0,1),
\end{equation}
and seek to fit $\beta$ by least-angle regression. As discussed above,
this procedure adds variables to the model one-by-one, and we need to
decide after how many variables $k$ to stop. The recent work of
\citet{taylor2014post} provides us with $p$-values for the sequence of hypotheses
$H_k^\inc$ defined in \eqref{eq:incremental}; Table \ref{tab:ex1} has a typical realization of
these $p$-values with data generated from a model
$$ n=50, \, p=10, \, x_{ij} \simiid \nn(0,1), \, \beta_1=2, \, \beta_3=4,  \, \beta_2 = \beta_4=\beta_5 \ldots \beta_{10}=0. $$
These $p$-values are not exchangeable, and must be treated 
in the order in which the predictors were entered: 3, 1, 4 etc.
Our goal is to use these $p$-values to produce an FDR-controlling stopping
rule.  In the following section, we introduce two procedures:
\emph{ForwardStop} and \emph{StrongStop} that control FDR.  
Figure \ref{fig:robexample} illustrates the performance of one of our proposed procedures; in this
example, it allows us to accurately estimate the support of $\beta$ while
successfully controlling the FDR.

\begin{figure}[t]
  \centering
  \includegraphics[width=0.9\textwidth]{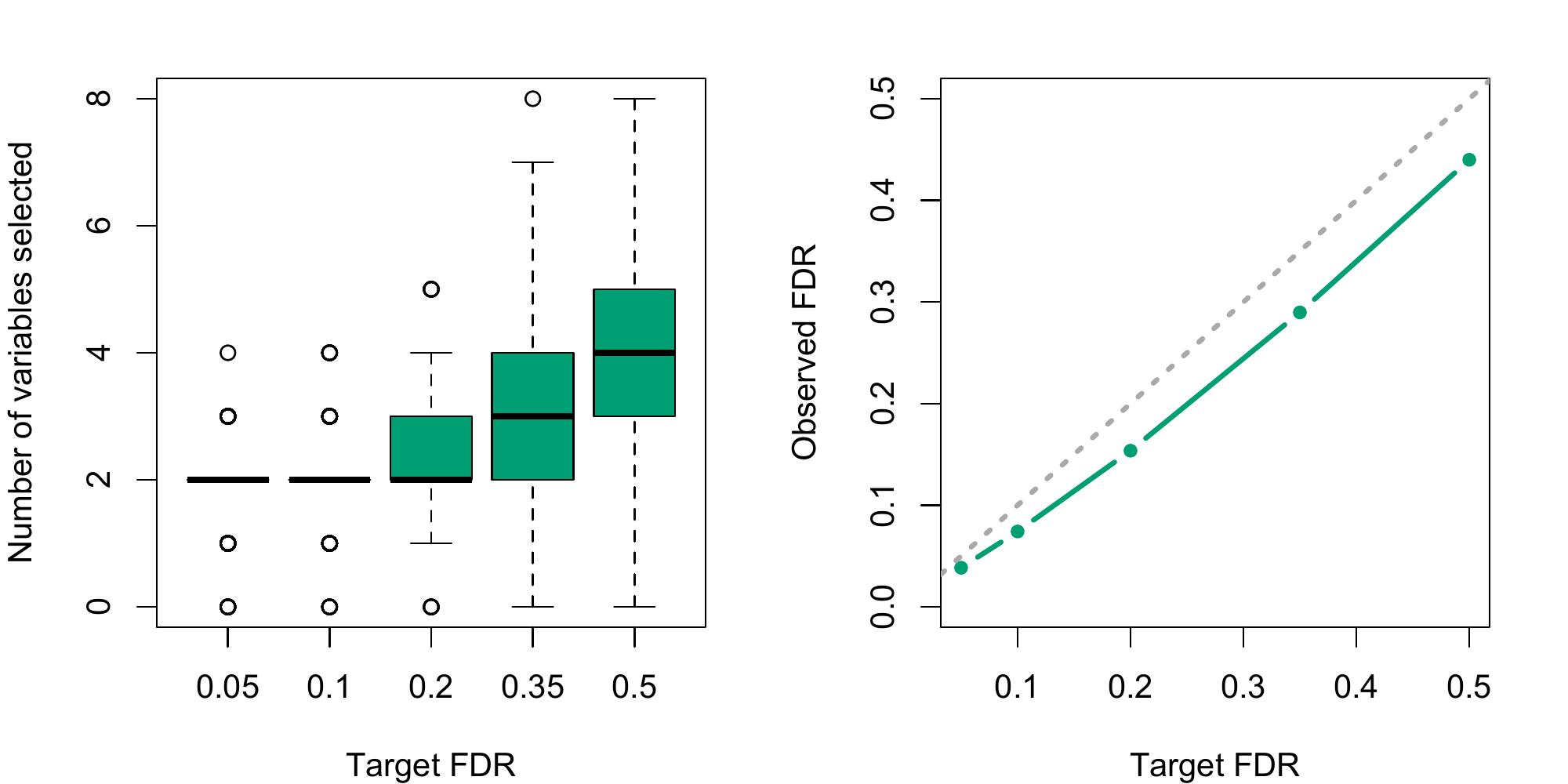}
  \caption{For the model selection problem in
Equation \eqref{eqn:linreg}, 1000 random realizations were simulated and the \emph{ForwardStop} procedure
applied.  The left panel shows the number of predictors selected at FDR levels 0.05,
0.1, 0.2, 0.35 and 0.5.  The right panel shows the observed FDR on the Y axis and
the Target FDR on the X axis.  The $45^o$ line is plotted in grey for reference.}
  \label{fig:robexample}
\end{figure}

\subsection{Stopping Rules for Ordered FDR Control}\label{section:intro:methodology}

In the ordered setting, a valid
rejection rule is a function of $p_1$ , ..., $p_m$ that returns a cutoff
$\hat{k}$ such that hypotheses $H_1,\dots,H_{\hat{k}}$ are rejected.  The
False Discovery Rate (FDR) is defined as 
$\E\left[{V(\hat{k})} / {\max(1,\hat{k})}\right]$, where $V(\hat{k})$ is the
number of null hypotheses among the rejected hypotheses $H_1$, ... , $H_{\hat{k}}$.  

We propose two rejection functions for this scenario, called \emph{ForwardStop}:
\begin{equation}
\hkF = \max\left\{k\in\{1,\, \dots, \,m\}: -\frac{1}{k}\sum_{i=1}^k
\log(1-p_i) \leq \alpha \right\},
\label{eq:ForwardStop}
\end{equation}
 and \emph{StrongStop}:
 \begin{equation}
\hkS = \max\left\{k\in\{1, \,\dots, \,m\}: \exp\left(\sum_{j=k}^m \frac{\log p_j}{j}\right) \leq \frac{\alpha k}{m}\right\}.
\label{eq:StrongStop}
\end{equation}
We adopt the convention that $\max(\emptyset) = 0$, so that $\hat k = 0$ whenever no rejections can be made.
In Section \ref{section:seqtheory} we show that both \emph{ForwardStop} and \emph{StrongStop} control FDR at level $\alpha$.

\emph{ForwardStop}  first transforms the $p$-values, and then sets the rejection threshold at the
largest $k$ for which the first $k$ transformed $p$-values have a small enough
average. If the first $p$-values are very small, then \emph{ForwardStop} will always
reject the first hypotheses regardless of the last $p$-values.  As a result, the rule is moderately robust to potential misspecification of the null distribution of the $p$-values at high indexes.  This is particularly important in model selection applications, where one may doubt whether the asymptotic distribution is accurate in finite samples at high indexes.

Our second rule, \emph{StrongStop} \eqref{eq:StrongStop}, comes with a stronger
guarantee than \emph{ForwardStop}. As we show in Section
\ref{section:seqtheory}, provided that the non-null $p$-values precede the null
ones, it not only controls the FDR, but also controls the  Family-Wise Error
Rate (FWER) at level $\alpha$.  Recall that the FWER is the probability that a
decision rule makes even a single false discovery. If false discoveries have a
particularly high cost, then \emph{StrongStop} may be more attractive than
\emph{ForwardStop}. The main weakness of \emph{StrongStop} is that the decision
to reject at $k$ depends on all the $p$-values after $k$.  If the very last
$p$-values are slightly larger than they should be under the uniform
hypothesis, then the rule suffers a considerable loss of power.

A major advantage of both \emph{ForwardStop} and \emph{StrongStop} is that
these procedures seek the largest $k$ at which an inequality holds, 
even if the inequality may not hold for some index $l$ with $l < k$. This property enables them
to get past some isolated large $p$-values for the early hypotheses, thus resulting
in a substantial increase in power. This phenomenon is closely related to the gain
in power of the \citet{benjamini1995controlling} procedure over
the \citet{simes1986improved} procedure.

\subsection{Related Work}

Although there is an extensive literature on FDR control and its variants
\citep[e.g.,][]{benjamini1995controlling,BY01,blanchard2008two,ETGC01,goeman2010sequential,romano2006stepup,storey2004strong}, no
definitive procedure for ordered FDR control has been proposed so far. The closest
method we are aware of is an adaptation of the $\alpha$-investing approach
\citep{aharoni2013generalized,foster2008alpha}. However, this procedure is not known to formally control
the FDR (\citeauthor{foster2008alpha} prove that it controls the mFDR, defined as
$\E V / (\E R + \eta)$ for some constant $\eta$); moreover, in our simulations, this
approach has lower power than our proposed methods.

The problem of providing FDR control in regression models has been studied, among others,
by \citet{barber2014controlling},  \citet{benjaminigavrilov2009}, \citet{bogdan2013statistical}, \citet{linfoster2011vif},
\citet{meinshausen2010stability}, \citet{samworth2012stability}, and \cite{wuboos2007pseudovar},
using a wide variety of ideas involving resampling, pseudo-variables, and specifically tailored
selection penalties. The goal of our paper is not to directly compete with these methods,
but rather to provide ``theoretical glue'' that lets us transform the rapidly growing family of
sequential $p$-values described in Section \ref{sec:path}
into model selection procedures with FDR guarantees.

We note that the problem of variable selection for regression models can be thought of as a generalization of the standard multiple testing problems, where each $p$-value corresponds to its own variable \citep[e.g.,][]{churchill1994empirical,10002012integrated,simonsen2004using,westfall1993resampling}.  In a standard genome-wide association study, for instance, one might test a family of hypotheses of the form $H_{i,0}: \text{SNP } i$ is associated with the response, $i = 1, \ldots, m$.  When there is high spatial correlation across SNP's, the set of rejected hypotheses is likely to contain correlated subgroups of SNP's that are redundant: while each is marginally significant, all SNP's in a subgroup carry essentially the same information about the response.  The goal of model selection is to avoid this type of redundancy by selecting a group of SNP's each of which contains significant distinct information about the response.

It is also important to contrast the goal of our work with that of prediction-driven model selection procedures such as cross-validation.  Prediction-driven approaches select models that minimize the estimated prediction error, but generally provide no guarantee on the statistical significance of the selected predictors.   Our goal is to conduct inference to select a parsimonious model with inferential guarantees, even though the selected model will generally be smaller than the model giving the lowest prediction error.  

Finally, a key challenge in conducting inference in regression settings is dealing with correlated predictors.
Indeed, when the predictors are highly
correlated, the appropriateness (and definition) of FWER and FDR as error criteria
may come into question.  If we select a noise variable that is highly correlated with a signal
variable, should we consider it to be a false selection?  This is a broad
question that is beyond the scope of this paper, but is worth considering when
discussing selection errors in problems with highly correlated $X$.  This
question is discussed in more detail in several papers
\citep[e.g.,][]{benjaminigavrilov2009,bogdan2013statistical,gsell2013fvr,linfoster2011vif,wuboos2007pseudovar}. 

\subsection{Outline of this paper}

We begin by presenting generic methods for FDR control in ordered settings.
Section \ref{section:seqtheory} develops our two main proposals for sequential
testing, \emph{ForwardStop} and \emph{StrongStop}, along with their theoretical
justification.  We evaluate these rules on simulations in Section
\ref{section:examples}.  In Section \ref{section:application}, we review the
recent literature on sequential testing for model selection
problems and discuss its relation to our procedures. Moreover, we
develop a more specialized version of \emph{StrongStop}, called
\emph{TailStop}, which takes advantage of special properties of some of the
proposed sequential tests.  Finally, in \ref{section:appsimulation}, we evaluate
our sequential FDR controlling procedures in combination with pathwise regression test statistics of
\citet{lockhart2013significance} and \citet{taylor2014post} in both simulations
and a real data example.

All proofs are provided in Appendix \ref{section:appendixproof}.

\section{False Discovery Rate Control for Ordered Hypotheses}
\label{section:seqtheory}

In this section, we study a generic ordered layout where we test a sequence of hypotheses
that are associated with $p$-values $p_1, \, ..., \, p_m \in [0, 1]$.
A subset $N \subset \{0, \, ..., \, m\}$ of these $p$-values are null, with the property that
\begin{equation}
\label{eq:setup}
\{p_i : i \in N\} \simiid U([0, 1]).
\end{equation}
We can reject the $k$ first hypotheses for some $k$ of our choice. Our goal is to make $k$ as large as possible, while controlling the number of false discoveries
$$ V(k) = \left|\{i \in N : i \leq k\}\right|. $$
Specifically, we want to use a rule $\hk$ with a bounded false discovery rate
\begin{equation}
\label{eq:FDR_def}
{\rm FDR}(\hk) = \EE{{V(\hk)} \,\Big/\, {\max\left\{\hk, \, 1\right\}}}.
\end{equation}
We develop two procedures that provide such a guarantee.

Classical FDR literature focuses on rejecting a subset of hypotheses $R \in \{0, \, ..., \, m\}$ such that $R$ contains few false discoveries. \citet{benjamini1995controlling} showed that, in the context of \eqref{eq:setup}, we can control the FDR as follows. Let $p_{(1)}$, ..., $p_{(m)}$ be the sorted list of $p$-values, and let
$$ \hl_\alpha = \max\left\{l : p_{(l)} \leq \frac{\alpha \, l}{m}\right\}. $$
Then, if we reject those hypotheses corresponding to $\hl_\alpha$ smallest $p$-values, we control the FDR at level $\alpha$. This method for selecting the rejection set $R$ is known as the BH procedure.
The key difference between the setup of \citet{benjamini1995controlling} and our problem is that, in the former, the rejection set $R$ can be arbitrary, whereas here we must always reject the first $k$ hypotheses for some $k$. For example, even if the $p$-value corresponding to the third hypothesis is very small, we cannot reject the third hypothesis unless we also reject the first and second hypotheses.

\subsection{A BH-Type Procedure for Ordered Selection}

The main motivation behind our first procedure---\emph{ForwardStop}---is the following thought experiment. Suppose that we could transform our $p$-values $p_1$, ..., $p_m$ into statistics $q_1 < ... < q_m$, such that the $q_i$ behaved like a sorted list of $p$-values. Then, we could apply the BH procedure on the $q_i$, and get a rejection set $R$ of the form $R = \{1, \, ..., \, k\}$.

Under the global null where $p_1$, ..., $p_m \simiid U([0, 1])$, we can achieve such a transformation using the R\'enyi representation theorem \citep{renyi1953theory}. R\'enyi showed that if $Y_1$, ..., $Y_m$ are independent standard exponential random variables, then
$$ \left( \frac{Y_1}{m}, \, \frac{Y_1}{m} + \frac{Y_2}{m - 1}, \, ..., \, \sum_{i = 1}^m \frac{Y_i}{m - i + 1}\right)
\eqd E_{1, \, m}, \, E_{2, \, m}, \, ..., \, E_{m, \, m}, $$
where the $E_{i, \, m}$ are exponential order statistics, meaning that the $E_{i, \, m}$ have the same distribution as a sorted list of independent standard exponential random variables. R\'enyi representation provides us with a tool that lets us map a list of independent exponential random variables to a list of sorted order statistics, and vice-versa.

In our context, let
\begin{align}
\label{eq:Y}
&Y_i = -\log(1 - p_i), \\
\label{eq:Z}
&Z_i = \sum_{j = 1}^i Y_j \big/ (m - j + 1), \eqand \\
\label{eq:q}
&q_i = 1 - e^{-Z_i}.
\end{align}
Under the global null, the $Y_i$ are distributed as independent exponential random variables. Thus, by R\'enyi representation, the $Z_i$ are distributed as exponential order statistics, and so the $q_i$ are distributed like uniform order statistics.

This argument suggests that in an ordered selection setup, we should reject the first $\hkF^q$ hypotheses where
\begin{equation}
\label{eq:exact}
\hkF^q = \max\left\{k : q_k \leq \frac{\alpha \, k}{m}\right\}.
\end{equation}
The R\'enyi representation combined with the BH procedure immediately implies that the rule $\hkF$ controls the FDR at level $\alpha$ under the global null. Once we leave the global null, R\'enyi representation no longer applies; however, as we show in the following results, our procedure still controls the FDR.

We begin by stating a result under a slightly restricted setup, where we assume that the $s$ first $p$-values are non-null and the $m - s$ last $p$-values are null. We will later relax this constraint. The proof of the following result is closely inspired by the martingale argument of \citet{storey2004strong}.  As usual, our analysis is conditional on the non-null $p$-values (i.e., we treat them as fixed).

\begin{lemma}
\label{lemm:exact}
Suppose that we have $p$-values $p_1, \, ..., \, p_m \in (0, 1)$, the last $m - s$ of which are null, i.e., independently drawn from $U([0, 1])$. Define $q_i$ as in \eqref{eq:q}. Then
the  rule $\hkF^q$  controls the FDR at level $\alpha$, meaning that

\begin{equation}
\label{eq:ordered_FDR}
\EE{{\left(\hkF^q - s\right)_+}  \,\Big/\, { \max\left\{\hkF^q, 1\right\}  }} \leq \alpha. 
\end{equation}
\end{lemma}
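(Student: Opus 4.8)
The plan is to mimic Storey's martingale proof of BH, adapted to the monotone statistics $q_i$ and the specific structure where the null $p$-values occupy the tail. Since the analysis is conditional on the non-null $p$-values, I treat $p_1, \dots, p_s$ (equivalently $Y_1, \dots, Y_s$ and the partial sum $\sum_{j=1}^s Y_j/(m-j+1)$) as fixed, and the randomness comes only from the null $p$-values $p_{s+1}, \dots, p_m \simiid U([0,1])$. First I would observe that the quantity $\left(\hkF^q - s\right)_+$ counts exactly the number of null hypotheses that are falsely rejected when $\hkF^q > s$, since $V(\hkF^q) = (\hkF^q - s)_+$ under this tail-null arrangement. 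So the left-hand side of \eqref{eq:ordered_FDR} is literally the FDR, and the goal is to bound $\EE{V(\hkF^q)/\max\{\hkF^q,1\}} \le \alpha$.

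The key idea is to run a backward process in the index $k$, from $k = m$ down to $k = s$, and build a martingale. Note $Z_k = Z_{k-1} + Y_k/(m-k+1)$ is increasing in $k$, hence $q_k = 1 - e^{-Z_k}$ is increasing, and the event $\{\hkF^q = k\}$ is $\{q_k \le \alpha k/m\} \cap \{q_\ell > \alpha\ell/m \ \forall \ell > k\}$. The plan is to define, for each candidate cutoff $k$ in the null region, a ratio of the form $V(k)/\text{(something)}$ and show that, when read backward in $k$, it is a supermartingale (or a martingale) with respect to the filtration generated by the ``remaining'' null increments. Concretely, conditioning on $Z_k$ (which summarizes $q_k$ and the rejection decision at $k$) and running backward, I would track a statistic like $N_k/k$ where $N_k$ is the number of nulls among the first $k$; the crucial algebraic fact to extract from the R\'enyi construction is that, given $Z_k = z$, the increments going backward behave like order statistics whose conditional structure makes the candidate ratio process a supermartingale. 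Then the optional stopping theorem applied at the (backward) stopping time $\hkF^q$ yields the bound.

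The main obstacle will be establishing the martingale/supermartingale property precisely, because once we leave the global null the $q_i$ are no longer genuine uniform order statistics — the R\'enyi representation only holds for the null coordinates. The technical heart is to argue that conditioning on the non-null prefix, the backward-indexed process built from the null tail still dominates (stochastically, in the supermartingale sense) what it would be under the global null, so that the FDR bound survives. I expect this to require a careful conditioning argument: fixing $\hkF^q = k$ pins down $Z_k$, and I must show that the conditional expectation of the next backward term is controlled. A clean way to do this is to condition on the value of $q_{\hkF^q}$ at the selected index and use the fact that, at the threshold, $q_k \le \alpha k/m$, so the number of false rejections is bounded by a quantity whose expected ratio telescopes to $\alpha$.

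In summary, the steps I would carry out are: (i) identify $(\hkF^q - s)_+$ with the false-discovery count $V(\hkF^q)$ under the tail-null layout; (ii) set up the backward filtration in $k$ driven by the null increments $Y_{s+1}, \dots, Y_m$, using that $q_k$ is monotone and that $\hkF^q$ is a backward stopping time; (iii) construct the candidate ratio process and verify the supermartingale property via the conditional distribution supplied by the R\'enyi representation restricted to the null coordinates; and (iv) apply optional stopping to conclude $\EE{V(\hkF^q)/\max\{\hkF^q,1\}} \le \alpha$. The delicate verification in step (iii) — showing the process remains a supermartingale despite the non-null prefix being arbitrary — is where I expect nearly all the real work to lie.
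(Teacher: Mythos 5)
Your high-level toolkit is the right one---the paper's own proof is indeed a Storey-style backward-martingale argument that conditions on the non-null prefix and uses the R\'enyi representation---and your step (i) is correct: under the tail-null layout, $(\hkF^q-s)_+$ is exactly the false-discovery count. But the concrete process you propose to stop cannot work, and the step you defer as ``the real work'' is precisely where the plan breaks. With the nulls occupying indices $s+1,\dots,m$, the number of nulls among the first $k$ hypotheses is the \emph{deterministic} quantity $N_k=(k-s)_+$, so your index-indexed ratio $N_k/k$ carries no randomness: it is a fixed, non-constant sequence, hence not a (super)martingale with respect to any filtration, and optional stopping applied to it at the random index $\hkF^q$ yields nothing---$\EE{(\hkF^q-s)_+\big/\max\{\hkF^q,1\}}$ is itself the quantity to be bounded, and the expectation of a deterministic function at a stopping time is not controlled by its terminal value. (Relatedly, the event $\{\hkF^q=k\}$ does not ``pin down $Z_k$''; it only constrains $Z_k$ to an interval.)

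The missing idea is the reduction from the index domain to the \emph{threshold} domain via the BH self-consistency identity; this is what makes a martingale argument available. Define $R(t)=\left|\{i: q_i\le t\}\right|$, $V(t)=\left|\{i>s: q_i\le t\}\right|$, and $\hatt_\alpha=\max\{t\in[0,1]: t\le \alpha R(t)/m\}$, so that $R(\hatt_\alpha)=\hkF^q$. Because $R$ is a step function with upward jumps, $\hatt_\alpha=\alpha R(\hatt_\alpha)/m$, whence the false discovery proportion equals $\frac{\alpha}{m}\,V(\hatt_\alpha)\big/\hatt_\alpha$ and the problem reduces to showing $\EE{V(\hatt_\alpha)/\hatt_\alpha}\le m$. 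The randomness now sits in $V(t)$, indexed by the continuous threshold $t$, and that is where the martingale lives: conditional on $p_1,\dots,p_s$, applying R\'enyi only to the null increments shows that $q_{s+1},\dots,q_m$ are \emph{exactly} uniform order statistics on $[q_s,1]$, since $1-q_{s+i}=(1-q_s)\,e^{-(Z_{s+i}-Z_s)}$ and $Z_{s+i}-Z_s$ involves only null $Y_j$'s. Hence $M(t)=V(t)/(t-q_s)$ is a backward martingale on $(q_s,1]$ with $M(1)=(m-s)/(1-q_s)$, the threshold $\hatt_\alpha$ is a backward stopping time, and optional stopping combined with $\frac{V(t)}{t}=\frac{t-q_s}{t}\,M(t)\le(1-q_s)\,M(t)$ gives $\EE{V(\hatt_\alpha)/\hatt_\alpha}\le m-s\le m$ (the event $\hatt_\alpha\le q_s$ contributes zero). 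Note in particular that the stochastic-domination argument you anticipated needing for the non-null prefix is unnecessary: conditionally on the prefix the null structure is exact, and the prefix enters only through $q_s$, which cancels in the final bound.
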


Now the test statistics $q_i$ constructed in Lemma \ref{lemm:exact} depend on $m$. We can simplify the rule by augmenting our list of $p$-values with additional null test statistics (taking $m\rightarrow \infty$), and using the fact that $\frac{1 - e^{-x}}{x} \rightarrow 1$ as $x$ gets small. This gives rise to one of our main proposals:

\begin{proc}[ForwardStop]
\label{proc:forward}
Let $p_1, \, ..., \, p_m \in [0, 1]$, and let $0 < \alpha < 1$. We reject hypotheses $1, \, ..., \, \hkF$, where
\begin{align}
\label{eq:forward}
\hkF = &\max\left\{k \in \{1, \, ..., \, m\} : \frac{1}{k}\sum_{i = 1}^k Y_i \leq \alpha \right\},  \\
&\text{and } Y_i = -\log(1 - p_i). \notag
\end{align}
\end{proc}

We call this procedure \emph{ForwardStop} because it scans the $p$-values in a forward manner: If $\frac{1}{k}\sum_{i = 1}^k Y_i \leq \alpha$, then we know that we can reject the first $k$ hypotheses regardless of the remaining $p$-values. This property is desirable if we trust the first $p$-values more than the last $p$-values.

A major advantage of \emph{ForwardStop} over the direct R\'enyi stopping rule \eqref{eq:exact}
is that \emph{ForwardStop} provides FDR control even when some null hypotheses are interspersed
among the non-null ones. In particular, in the regression setting, this is important for achieving
FDR control for the incremental hypotheses $H_k^\inc$ \eqref{eq:incremental}, which are not
in general nested.

\begin{theo}
\label{theo:forward}
Suppose that we have $p$-values $p_1, \, ..., \, p_m \in (0, 1)$, a subset $N \subseteq \{1, \, ..., \, m\}$ are null, i.e., independently drawn from $U([0, 1])$. Then, the \emph{ForwardStop} procedure $\hkF$ \eqref{eq:forward} controls FDR at level $\alpha$, meaning that
$$ \EE{{\left|\left\{1, \, ..., \, \hkF \right\} \cap N\right|}  \,\Big/\, {\max\left\{\hkF, \, 1\right\}}} \leq \alpha. $$
\end{theo}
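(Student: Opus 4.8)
The plan is to condition on the realized values of the non-null $p$-values, so that the only remaining randomness lives in the null coordinates, where $Y_i = -\log(1-p_i)$ are i.i.d.\ $\Exp(1)$. Writing $S_k = \sum_{i=1}^k Y_i$, the key structural observation is that $\hkF = \max\{k : S_k \le \alpha k\}$ is a \emph{last}-crossing time, hence a stopping time for the backward filtration $\mathcal{G}_k = \sigma(S_k,\, Y_{k+1}, \ldots, Y_m)$ obtained by revealing the increments from $m$ down to $k$: the event $\{\hkF = k\}$ asks that $S_k \le \alpha k$ while $S_j > \alpha j$ for all $j > k$, and all of these are $\mathcal{G}_k$-measurable. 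This reduces the theorem to an optional-stopping statement for a process run backward in $k$.

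First I would record the deterministic inequality available at the stopping index: on $\{\hkF \ge 1\}$ we have $S_{\hkF} \le \alpha \hkF$, and since the non-null increments are nonnegative this gives $\sum_{i \in N,\, i \le \hkF} Y_i \le \alpha \hkF$. The false-discovery proportion is $V(\hkF)/\hkF$ with $V(k) = |\{1,\ldots,k\}\cap N|$, so the task is to trade the \emph{count} $V(\hkF)$ of null rejections against the \emph{sum} of the corresponding null exponentials, which concentrates around $V(\hkF)$ by the law of large numbers. Concretely, I would exhibit a backward supermartingale $M_k$ adapted to $(\mathcal{G}_k)$ satisfying $M_{\hkF} \ge V(\hkF)/\hkF$ at the stopping time and $\E[M_m] \le \alpha$ at the backward-initial index; optional stopping then yields $\mathrm{FDR}(\hkF) \le \E[M_{\hkF}] \le \E[M_m] \le \alpha$. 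The natural candidate is built from the ratio of the null count to the cumulative null exponential mass, whose backward increments I would compute using the $\mathrm{Beta}$/Dirichlet law of an individual $\Exp(1)$ summand given the running total.

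I expect the main obstacle to be the \emph{interspersed} non-null coordinates. When all nulls sit at the tail, as in Lemma~\ref{lemm:exact}, the cumulative null mass is exactly the exponential order-statistic (Poisson-process) object for which a clean backward martingale exists; with nulls scattered among fixed non-null increments, these positive non-null terms perturb the partial sums and must be shown only to \emph{help}---i.e.\ that the nulls-at-the-end layout is least favorable for the FDR. The cleanest route may therefore be a two-step reduction: (i) a coupling/rearrangement argument moving all nulls to the tail without decreasing the FDR, after which (ii) the bound follows from the contiguous-null analysis, passed from the finite-$m$ rule $\hkF^q$ of Lemma~\ref{lemm:exact} to \emph{ForwardStop} via the augmentation $m \to \infty$ and the limit $\tfrac{1-e^{-x}}{x}\to 1$ already described. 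Verifying that relocating a null toward the tail cannot decrease $V(\hkF)/\hkF$---where both the numerator and the stopping index $\hkF$ shift---is the delicate step, and is where I would spend the bulk of the effort.
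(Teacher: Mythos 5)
Your framework for the contiguous case---condition on the non-null $p$-values, read $\hkF$ as a stopping time for a backward filtration, and apply optional stopping to a backward martingale built from the null exponentials---is sound, and it is essentially how the paper proves Lemma \ref{lemm:exact} and Corollary \ref{coro:limit}. The genuine gap is your step (i), which is exactly where the content of Theorem \ref{theo:forward} beyond Lemma \ref{lemm:exact} lives: the claim that moving all nulls to the tail cannot decrease the FDR is not merely delicate, it is false. Take $m = 2$, $\alpha = 0.2$, one null hypothesis, and one non-null hypothesis whose fixed $p$-value gives $Y_{\mathrm{nn}} = -\log(1 - p_{\mathrm{nn}}) = 0.5$ (the theorem places no assumption on the non-null $p$-values, and the analysis is conditional on them). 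With the null in position $1$, rejection at $k = 2$ would require $Y_1 + 0.5 \le 0.4$, which is impossible, so $\hkF = 1$ exactly when $Y_1 \le 0.2$, and that single rejection is a false discovery; hence
$$ \mathrm{FDR} = \PP{Y_1 \le 0.2} = 1 - e^{-0.2} \approx 0.18. $$
With the null moved to the tail (position $2$), no rejection is ever possible, so $\mathrm{FDR} = 0$. Moving the null to the tail strictly decreases the FDR, so the nulls-at-the-end layout is not least favorable; worse, since the two expectations are strictly ordered in the wrong direction, no coupling whatsoever can deliver the pointwise comparison your reduction needs. (Both configurations obey the bound $\le \alpha$, so the theorem itself is not threatened---only your route to it.)

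The missing idea, and what the paper actually does, is to handle interspersed nulls directly rather than by rearrangement. The paper redefines the R\'enyi normalization as $Z_i^{ALT} = \sum_{j \le i} Y_j / \nu(j)$, where $\nu(j)$ counts only the \emph{null} indices in $\{j, \ldots, m\}$. By R\'enyi representation the null contributions to $Z_i^{ALT}$ are again exponential order statistics, while the fixed non-null increments enter only through a deterministic factor $r(i) \le 1$ multiplying $1 - q_i^{ALT}$; each null $q_i^{ALT}$ is therefore stochastically \emph{larger} than the corresponding uniform order statistic, which accelerates the decay of $V(t)$ and pushes the Lemma \ref{lemm:exact} optional-stopping bound in the conservative direction. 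Since $\nu(j)$ is unknown, the paper then appends arbitrarily many null $p$-values and passes to the limit, exactly as in the Corollary \ref{coro:limit} reduction you cite, recovering the ForwardStop rule and transferring the FDR bound by dominated convergence. A secondary flaw in your sketch: your candidate process, the null count over the cumulative null exponential mass evaluated at the indices $k$, is a backward \emph{sub}martingale, not a supermartingale---the Beta/Dirichlet computation gives $\E[k/T_k \mid T_{k+1}] = \frac{k}{k-1}\,\frac{k}{T_{k+1}} > \frac{k+1}{T_{k+1}}$---which is why the paper runs the argument on the continuous threshold scale, using $V(t)/t$ and its right-continuous modification, rather than at the jump times.
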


\subsection{Strong Control for Ordered Selection}
\label{sec:StrongStop}

In the previous section, we created the ordered test statistics $Z_i$ in \eqref{eq:Z} by summing transformed $p$-values starting from the first $p$-value. This choice was in some sense arbitrary. Under the global null, we could just as well obtain uniform order statistics $q_i$ by summing from the back:
\begin{align}
\label{eq:tY}
&\tY_i = -\log(p_i), \\
\label{eq:tZ}
&\tZ_i = \sum_{j = i}^m Y_j \big/ j, \eqand \\
\label{eq:tq}
&\tq_i = e^{-\tZ_i}.
\end{align}
If we run the BH procedure on these backward test statistics, we obtain another method for controlling
the number of false discoveries.

\begin{proc}[StrongStop]
\label{proc:strong}
Let $p_1, \, ..., \, p_m \in [0, 1]$, and let $0 < \alpha < 1$. We reject hypotheses $1, \, ..., \, \hk$, where
\begin{equation}
\label{eq:hkS}
\hkS = \max\left\{k\in\{1, \, \dots, \,m\}: \tq_k \leq \frac{\alpha k}{m}\right\}
\end{equation}
and $\tq_k$ is as defined in \eqref{eq:tq}.
\end{proc}

Unlike \emph{ForwardStop}, this new procedure needs to look at the $p$-values corresponding to the last hypotheses before it can choose to make any rejections.
This can be a liability if we do not trust the very last $p$-values much. Looking at the last $p$-values can however be useful if the model is correctly specified, as it enables us to strengthen our control guarantees: \emph{StrongStop} not only controls the FDR, but also controls the FWER.

\begin{theo}
\label{theo:strong}
Suppose that we have $p$-values $p_1, \, ..., \, p_m \in (0, 1)$, the last $m - s$ of which are null (i.e., independently drawn from $U([0, 1])$).  Then, the rule $\hkS$ from \eqref{eq:hkS} controls the FWER at level $\alpha$, meaning that
\begin{equation}
\label{eq:strong_guarantee}
\PP{\hkS > s} \leq \alpha. 
\end{equation}
\end{theo}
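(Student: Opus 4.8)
The plan is to reduce the FWER statement to a boundary-crossing statement about uniform order statistics, and then to the ordinary Benjamini--Hochberg guarantee under the global null.

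First I would observe that the event $\{\hkS > s\}$ depends only on the null $p$-values. Indeed, $\hkS > s$ holds if and only if there is some $k \in \{s+1, \ldots, m\}$ with $\tq_k \le \alpha k / m$, and for any such $k$ the statistic $\tq_k = \exp\left(-\sum_{j=k}^m \tY_j / j\right)$ is a function of the $\tY_j = -\log p_j$ with $j \ge k > s$ only, i.e. of the null $p$-values $p_{s+1}, \ldots, p_m$ alone. Hence no conditioning on the non-nulls is needed, and I may treat $p_{s+1}, \ldots, p_m \iid U([0,1])$. Next I would pin down the joint law of $(\tq_{s+1}, \ldots, \tq_m)$. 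Writing $V_l = \tY_{m-l+1}$, the nulls give $V_1, \ldots, V_{m-s} \iid \Exp(1)$, and reindexing by $l = m-j+1$ shows $\sum_{j=k}^m \tY_j/j = \sum_{l=1}^{m-k+1} V_l/(m-l+1)$, which is exactly the $(m-k+1)$-th partial sum in the R\'enyi representation for $m$ exponential order statistics. Completing $V_1, \ldots, V_{m-s}$ with $s$ fresh independent $\Exp(1)$ variables and setting $E_{r,m} = \sum_{l=1}^r V_l/(m-l+1)$, R\'enyi gives that the $E_{r,m}$ are the order statistics of $m$ i.i.d. exponentials, so that $U_{(i)} := e^{-E_{m-i+1,\,m}}$ are the order statistics of $m$ i.i.d. uniforms. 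Since $\tq_k = e^{-E_{m-k+1,m}} = U_{(k)}$ for every $k > s$, I obtain the joint identity $(\tq_{s+1}, \ldots, \tq_m) \eqd (U_{(s+1)}, \ldots, U_{(m)})$, the largest $m-s$ order statistics of an $m$-sample; crucially the threshold $\alpha k/m$ uses the full count $m$, which is precisely what makes this completion the correct one.

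I would then finish by enlarging the index set. The identity above yields
$$ \PP{\hkS > s} = \PP{\exists\, i \in \{s+1,\ldots,m\} : U_{(i)} \le \frac{\alpha i}{m}} \le \PP{\exists\, i \in \{1,\ldots,m\} : U_{(i)} \le \frac{\alpha i}{m}}. $$
The right-hand event is exactly the event that the BH procedure run on the $m$ i.i.d. uniforms $U_{(1)}, \ldots, U_{(m)}$ makes at least one rejection. Under this global null every rejection is false, so $V = R$ and the FDR of BH equals $\PP{R \ge 1}$, which is precisely the right-hand probability; since BH controls the FDR at level $\alpha$ (the guarantee recalled earlier in the excerpt), this probability is at most $\alpha$, and the theorem follows.

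The main obstacle is the bookkeeping in the R\'enyi step: one must match indices correctly (the backward sums realize the \emph{smallest} $m-s$ exponential order statistics, equivalently the \emph{largest} $m-s$ uniform order statistics) and justify the phantom completion to a genuine $m$-sample, so that the comparison against $\alpha i/m$ aligns with the BH thresholds. Everything after this distributional identity is a one-line domination plus the cited BH guarantee; alternatively, if a self-contained bound for the right-hand probability is preferred, the same quantity can be controlled by a reverse-martingale and optional-stopping argument on $U_{(i)}/i$ in the spirit of \citet{storey2004strong}, but the reduction to BH is the shortest route.
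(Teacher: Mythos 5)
Your proposal is correct and follows essentially the same route as the paper's proof: both reduce the FWER event to a statement involving only the null $p$-values, use the R\'enyi representation to identify the backward statistics $\tq_k$ for $k>s$ with uniform order statistics of a full $m$-sample, and conclude from the Simes/BH guarantee under the global null that the probability of any rejection beyond index $s$ is at most $\alpha$. Your explicit phantom-completion coupling is simply a more detailed rendering of the paper's observation that each event $\{\hkS = k\}$, $k > s$, is just as likely under the alternative as under the global null.
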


FWER control is stronger than FDR control, and so we immediately conclude from Theorem \ref{theo:strong} that \emph{StrongStop} also controls the FDR. Note that the guarantees from Theorem \ref{theo:strong} only hold when the non-null $p$-values all precede the null ones.

\section{Simulation Experiments: Simple Ordered Hypothesis Example}
\label{section:examples}

In this section, we demonstrate the performance of our methods in three
simulation settings of varying difficulty.  The simulation settings consist of
ordered hypotheses where the separation of the null and non-null hypotheses is
varied to determine the difficulty of the scenario.   Additional
simulations are provided in Appendix \ref{section:appendixsim}.

\begin{figure}[t]
  \centering
  \includegraphics[width=\textwidth]{./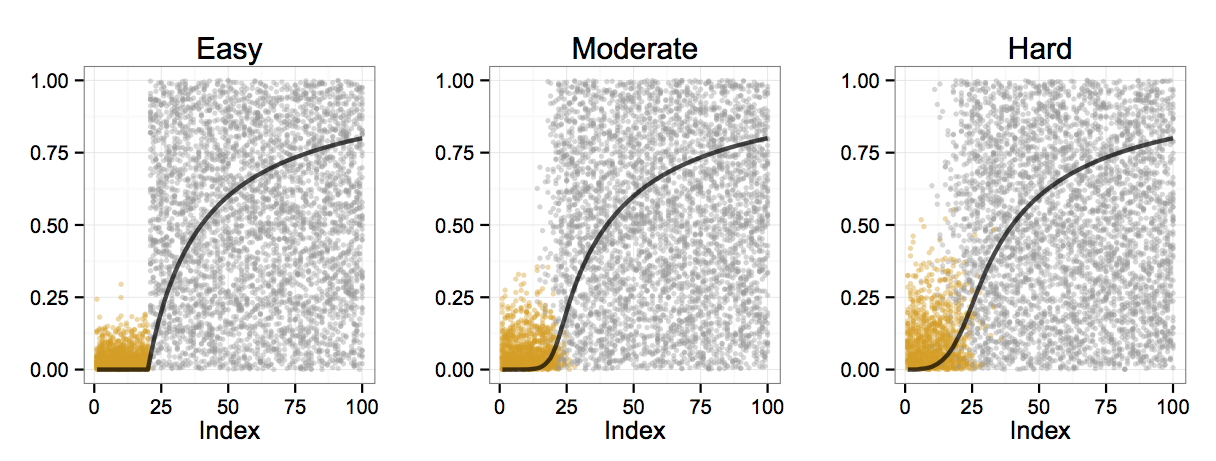}
  \caption{Observed $p$-values for $50$ realizations of the ordered hypothesis simulations
    described in Section \ref{section:examples}.  $p$-values corresponding to non-null hypotheses are shown in orange, while those corresponding to null hypotheses are shown in gray.  The smooth black curve is the average proportion of null hypotheses up to the given index, and is shown to help gauge the difficulty of the problem.  This curve can be thought of as the FDR of a fixed stopping rule which always stops at exactly the given index.    Non-null $p$-values are drawn from a $\mathrm{Beta}(1,b)$ distribution, with $b=23, 14, 8$ for the easy, medium and hard settings, respectively.  }
  \label{fig:changepoint}
\end{figure}

We consider a sequence of $m = 100$ hypotheses of which $s=20$ are non-null.  The $p$-values corresponding to the non-null hypotheses are drawn from a $\mathrm{Beta}(1,\beta)$ distribution, while those corresponding to true null hypotheses are $U([0,1])$.  
At each simulation iteration, the indices of the true null hypotheses are
selected by sampling without replacement from the set $\{1, 2, \ldots, m=100\}$
with probability of selection proportional to $i^\gamma$.  In this scheme, lower indices have smaller probabilities of being selected.  We present
results for three simulation cases, which we refer to as `easy' (perfect separation), `medium' ($\gamma = 8$), and `hard' ($\gamma = 4$).  In
the easy setup, we have strong signal $b=23$ and all the non-null hypotheses precede the null hypotheses, so we
have perfect separation.  In the medium difficulty setup, $b=14$ and the null and non-null hypotheses
are lightly inter-mixed.  In the hard difficulty setup, $b=8$ and the two are much more
inter-mixed.

\begin{figure}[t]
  \centering
  \includegraphics[width=0.3\textwidth]{./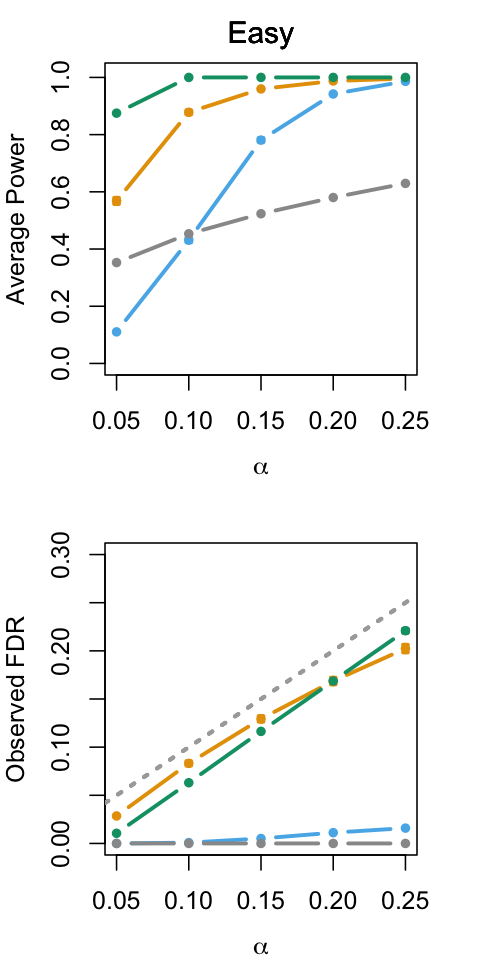} 
  \includegraphics[width=0.3\textwidth]{./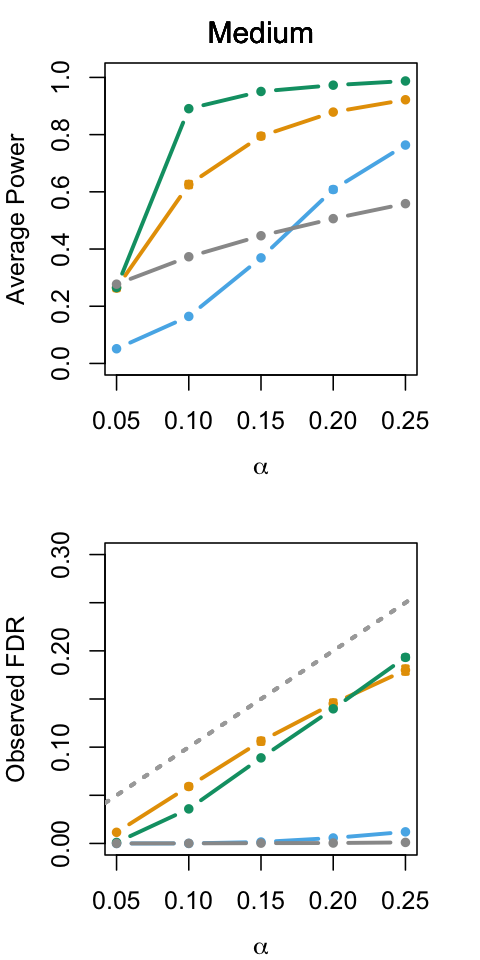}
  \includegraphics[width=0.3\textwidth]{./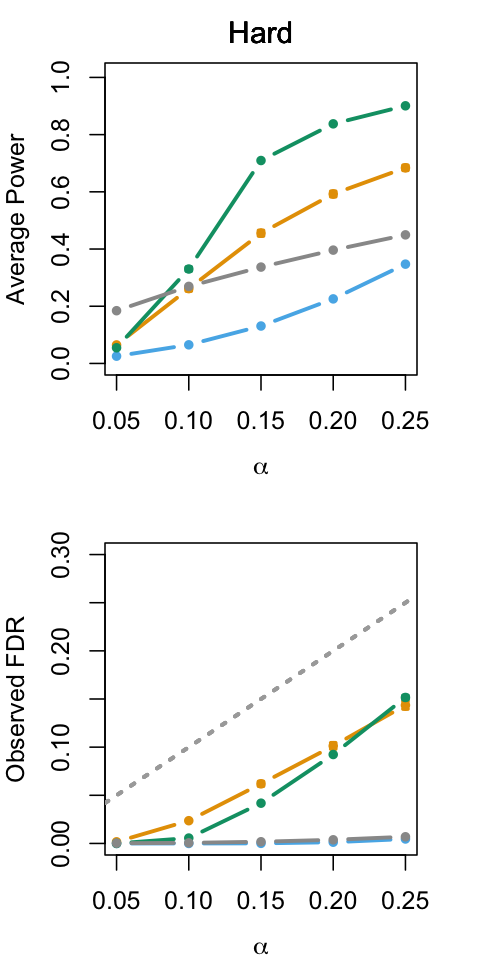} \\
  \includegraphics[width=0.75\textwidth]{./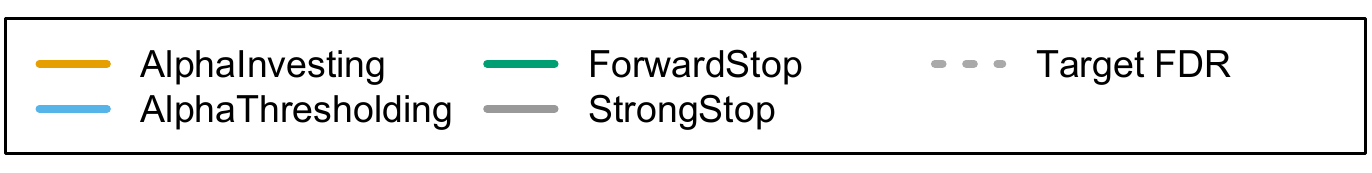}
  \caption{Average power and observed FDR level for the ordered hypothesis example based on 2000 simulation instances.  The notion of power used here is that of average power, defined as the fraction of non-null hypotheses that are rejected (i.e., $(k-V)/s$).   All four stopping rules successfully control FDR across the three difficulty settings.  \emph{StrongStop} and \emph{$\alpha$-thresholding} are both very conservative in terms of FDR control.  Even though \emph{ForwardStop} and \emph{$\alpha$-investing} have similar observed FDR curves, \emph{ForwardStop} emerges as the more powerful method, and thus has better performance in terms
of a precision-recall tradeoff. }
  \label{fig:changepoint_plots}
\end{figure}

For comparison, we also apply the following two rejection rules:
\begin{enumerate}
  \item \emph{Thresholding at $\alpha$.}  We reject all hypotheses up to the
    first time that a $p$-value exceeds $\alpha$.  This is guaranteed to
    control FWER and FDR at level $\alpha$ \citep{marcus1976closed}.
  \item \emph{$\alpha$-investing.}  We use the $\alpha$-investing scheme of
    \citet{foster2008alpha}.  While this procedure is not generally guaranteed
    to yield rejections that obey the ordering restriction, we can select
    parameters for which it does.  In particular, defining an investing rule
    such that the wealth is equal to zero at the first failure to reject, we get
    \begin{align*}
      \hat{k}_{invest} &= \min\left\{k: p_{k + 1} > \frac{(k + 1)\alpha}{1 + (k+1)\alpha}\right\}.
    \end{align*}
    This is guaranteed to control $\E V/(\E R + 1)$ at level $\alpha$.
    We note that, using generalized $\alpha$-investing \citep{aharoni2013generalized}, we could tweak the
$\alpha$-investing procedure to have more power to reject the earliest hypotheses and
less power for further ones; however, we will not explore that possibility here.
\end{enumerate}
These are the best competitors we are aware of for our problem.
We emphasize that, unlike
\emph{ForwardStop} and \emph{StrongStop}, these rules stop at the first $p$-value that
exceeds a given threshold. Thus, these methods will fail to identify true
rejections with very small $p$-values when they are preceded by a few
medium-sized $p$-values.

Figure \ref{fig:changepoint} shows scatterplots of observed $p$-values for 50
realizations of the three setups.  Figure \ref{fig:changepoint_plots} summarizes the performance of the four stopping rules.  We note that \emph{StrongStop} appears to be more powerful than other methods
weak signal/low $\alpha$ settings.  This may occur because, unlike the other methods,
\emph{StrongStop} scans $p$-values back-to-front and is therefore less sensitive
to the occurrence of large $p$-values early in the alternative. 

\section{Model Selection and Ordered Testing}
\label{section:application}

We now revisit the application that motivated our ordered hypothesis testing formalism.
As discussed in Section \ref{sec:path}, we assume that a path-based regression
procedure like forward stepwise regression or least-angle regression has given us
a sequence of models $\emptyset = \mm_0 \subset \mm_1 \subset \ldots \subset \mm_p$,
and our task is is to select one of these nested models. This results in an ordered hypothesis
testing problem that is \emph{conditional} on the order in which the regression algorithm adds
variables along its path.

We gave two options for formalizing the hypothesis that $\mm_k$ improves over $\mm_{k-1}$
and that the $k$-th variable should be added to the model: the incremental null \eqref{eq:incremental}
and the complete null \eqref{eq:complete}. In this section, we review recent proposals by
\citet{taylor2014post} and \citet{lockhart2013significance} for testing each of these nulls
in the case of least-angle regression and the lasso respectively,
and show how to incorporate them into our framework.

We emphasize again that the field
of ordered hypothesis testing appears to be growing rapidly, and that the applicability of our
sequential FDR controlling procedures is not limited to the tests surveyed here; for example,
if we wanted to test $H_k^\comp$ for forward stepwise regression or the graphical lasso,
we could use the test statistics of \citet{loftus2014significance} or \citet{gsell2013adaptive}
respectively.

\subsection{Testing the Incremental Null for Least-Angle Regression}
\label{section:spacingstest}

In the context of least-angle regression, \citet{taylor2014post} provide exact,
finite sample $p$-values for $H_k^\inc$ for generic design matrices $X$. The corresponding
test statistic is called the {\em spacing test}.
The first spacing test statistic $T_1$ has a simple form
\begin{equation}
\label{eq:lasso:test0}
T_1 = 
\left(1  - \Phi\left(\frac{\lambda_{1}}{\sigma }\right)\right) \, \Big / \,
\left(1 - \Phi\left(\frac{\lambda_{2}}{\sigma }\right)\right), 
\end{equation}
where $\lambda_1$ and $\lambda_2$ are the first two knots along the least-angle
regression path and $\sigma$ is the noise scale.
Given a standardized design matrix $X$ and the null hypothesis $H_1^k$, $T_1$ is uniformly distributed
over $[0, \, 1]$. Remarkably, this result holds under general position conditions on $X$ that hold
almost surely if $X$ is drawn from a continuous distribution, and
does not require $n$ or $p$ to be large.

\citet{taylor2014post} also derive similar test statistics $T_k$ for subsequent steps along
the least-angle regression path, which can be used for testing $H_K^\inc$.
Assuming Gaussian noise, all the $H_k^\inc$-null $p$-values
produced by this test are 1-dependent and uniformly distributed over $[0, \, 1]$.
For the purpose of our demonstrations, we apply our general FDR
control procedures directly as though the $p$-values were independent.
Developing a version of the spacing test that yields independent $p$-values remains an
active area of research.

\subsection{Testing the Complete Null for the Lasso}
\label{section:covtestbackground:lasso}

We also apply our formalism to testing the complete null for
the lasso path, using the covariance test statistics of \citet{lockhart2013significance}.
As our experiments will make clear, an advantage of testing the complete null
instead of the incremental null is the substantial increase in power.

In the case of orthogonal $X$, the covariance test statistics have the particularly simple form
\begin{equation}
  \label{eq:knotstatistic}
  T_k = \lambda_k(\lambda_k-\lambda_{k+1}),
\end{equation}
where $\lambda_1\ge\lambda_2\ge\dots$ denote the knots of the lasso path. Because
$X$ is orthogonal, the lasso never removes variables along its path, and so we know there
will be exactly $p$ knots.
\citet{lockhart2013significance} show that these test statistics satisfy the following asyptotic
guarantee. Recalling that the complete hypotheses are nested, suppose that
$H_1^\comp$, ..., $H_s^\comp$ are false, and $H_{s+1}^\comp$ is true.
Then, in the limit with $s$ fixed $n, \, p \rightarrow \infty$,
\begin{equation}
\label{eq:lasso_test}
\left(T_{s+1}, \, ... , \, T_{s+\ell}\right) \Rightarrow \left( \Exp(1), \,
\Exp\left(\frac{1}{2}\right), \, ..., \, \Exp\left(\frac{1}{\ell}\right)\right)
\end{equation}
for any fixed $\ell \geq 1$. As shown below, we can use the harmonic asymptotics
of these test statistics to improve the power of our sequential procedures.

The major limitation of the statistics \eqref{eq:knotstatistic} is that their distribution
can only be controlled asymptotically, and for orthogonal $X$. \citet{lockhart2013significance}
also provide adaptations of \eqref{eq:knotstatistic} that hold for non-orthogonal $X$;
however, the required asymptotic regime is then quite stringent so we may prefer to
use finite-sample-exact tests of \citet{taylor2014post} discussed in Section \ref{section:spacingstest}.
In the future, it may be possible to use ideas from \citet{fithian2014optimal} to 
devise non-asymptotic and powerful tests of $H_k^\comp$ for generic $X$.


\subsubsection{False Discovery Rate Control for Harmonic Test Statistics}
\label{section:tailstop}

Motivated by the harmonic form of the test statistics $T_k$ in \eqref{eq:lasso_test},
we show here how to improve the power of our sequential procedures in this setting.
Similar harmonic asymptotics also arise in other contexts, e.g., the test statistics for
the graphical lasso of \citet{gsell2013adaptive}.

Abstracting away from concrete regression problems, suppose that we have a sequence of arbitrary
statistics $T_1, \, ..., \, T_m \geq 0$ corresponding to $m$ hypotheses. The
first $s$ test statistics correspond to signal variables; the subsequent ones
are independently distributed as
\begin{equation}
\label{eq:lasso_ideal}
\left (T_{s+1}, \, ..., \, T_m\right) \sim \left( \Exp(1), \, \Exp\left(\frac{1}{2}\right), \, ..., \, \Exp\left(\frac{1}{m-s}\right)\right),
\end{equation}
where $\Exp(\mu)$ denotes the exponential distribution with mean $\mu$.
As before, we wish to construct a stopping rule that controls the FDR. 

To apply either {\em ForwardStop} or {\em StrongStop} 
using $p$-values based on \eqref{eq:lasso_ideal} would require knowledge of the
number of signal variables $s$, and hence would not be practical.
Fortuitously, however, an extension of this idea yields a variation of {\em StrongStop}
that does not require knowledge of $s$ and controls FDR.
Under \eqref{eq:lasso_ideal}, we have $j\cdot T_{s+j} \sim \Exp(1)$.
Using this fact, suppose that we knew $s$ and formed the {\em StrongStop}
rule for the $m-s$ null test statistics.
This would suggest a test based on
\begin{equation}
\label{eq:qStrongStopScaled}
q^*_i=\exp\left[-\sum_{j=i}^m \frac{\max\{1, \, j-s\}}{j} \, T_j\right ]
\end{equation}
This is not a usable test, since it depends on knowledge of $s$.
Now suppose we set $s=0$, giving
\begin{equation}
\label{eq:qTail}
q^*_i = \exp\left[-\sum_{j = i}^m T_j\right]
\end{equation}
 An application of the BH procedure to the $q_i^*$ leads to the following rule.

\begin{proc}[TailStop]
\label{proc:tailStop}
Let $q_i^*$ be defined as in \eqref{eq:qTail}. We reject hypotheses $1, \, ..., \, \hkT$, where
\begin{equation}
\label{eq:TailStop}
\hkT = \max\left\{k : q^*_k \leq \frac{\alpha k}{m}\right\}. 
\end{equation}
\end{proc}
Now the choice $s=0$ is anti-conservative (in fact, it is the least conservative possibility for $s$), and so as expected we lose
the strong control property of {\em StrongStop}.
But surprisingly, in the idealized setting of (\ref{eq:lasso_ideal}), {\em TailStop}  controls the FDR nearly exactly.

\begin{theo}
\label{theo:tailStop}
Given \eqref{eq:lasso_ideal}, the rule from \eqref{eq:TailStop} controls FDR at level $\alpha$. More precisely,
$$\EE{{\left(\hkT - s\right)_+}\,\Big/\,{ \max\left\{\hkT, 1\right\}  }} = \alpha \, \frac{m - s}{m}. $$
\end{theo}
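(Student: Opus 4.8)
The plan is to show that \emph{TailStop} is exactly the Benjamini--Hochberg step-up procedure applied to the pre-sorted statistics $q_i^*$, that the null $q_i^*$ are distributed as uniform order statistics, and then to invoke the exact-equality form of the BH false discovery rate identity. First I would record a monotonicity fact: since $q_i^* = \exp[-\sum_{j=i}^m T_j]$ and the $T_j$ are nonnegative, $q_1^* \le q_2^* \le \cdots \le q_m^*$. Thus the $q_i^*$ are already in sorted order, and the rule \eqref{eq:TailStop} rejecting $\{1,\ldots,\hkT\}$ is precisely BH at level $\alpha$ applied to the list $q_1^*, \ldots, q_m^*$, with $\hkT$ equal to the number of rejections.

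Next I would identify the law of the null statistics. For $j = s+l$ with $l \ge 1$, assumption \eqref{eq:lasso_ideal} gives $W_l := l\,T_{s+l} \simiid \Exp(1)$, so that for every $i > s$,
\[
-\log q_i^* \;=\; \sum_{j=i}^m T_j \;=\; \sum_{l=i-s}^{m-s} \frac{W_l}{l}.
\]
This is exactly the backward sum underlying \emph{StrongStop} (now on $M := m-s$ exponentials), so by the R\'enyi representation $(q_{s+1}^*, \ldots, q_m^*)$ is distributed as the order statistics $U_{(1)} \le \cdots \le U_{(M)}$ of $M$ i.i.d.\ uniforms. I would then observe that the FDR integrand depends on the nulls alone: for $k > s$ the statistic $q_k^*$ is a function of $T_{s+1}, \ldots, T_m$ only, and $(\hkT - s)_+$ is nonzero exactly when some $k > s$ is rejected, in which case $\hkT$ is the largest such $k$. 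Writing $L$ for the number of rejected null statistics, this gives $V(\hkT) = (\hkT - s)_+ = L$ and $\hkT = s + L$ on $\{L \ge 1\}$, so the quantity to be computed is $\E[\,L/(s+L)\,\mathbf 1\{L\ge 1\}\,]$.

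Finally I would couple this to a standard BH problem on $m$ hypotheses in which the $s$ signal $p$-values are set identically to $0$ and the $M$ null $p$-values are the i.i.d.\ uniforms from the previous step. Running BH at level $\alpha$ rejects all $s$ signals (their $p$-values vanish) together with exactly $L$ nulls, since the null at sorted position $s+l$ is $U_{(l)}$ and the threshold there is $\alpha(s+l)/m$; hence this problem has $V = L$, $R = s+L$, and its FDR equals $\E[\,L/(s+L)\,\mathbf 1\{L\ge 1\}\,]$. Invoking the exact-equality form of the BH identity---each of the $M$ independent uniform nulls contributes exactly $\alpha/m$ to $\E[\mathbf 1\{\text{rejected}\}/(R\vee 1)]$, so their sum is $M\alpha/m$---yields $\mathrm{FDR} = \alpha(m-s)/m$.

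The main obstacle is securing the exact equality rather than a mere upper bound. This hinges on the per-null-$p$-value identity $\E[\mathbf 1\{\text{$i$ rejected}\}/(R\vee 1)] = \alpha/m$ for BH, which requires each null $p$-value to be uniform and independent of the others; the R\'enyi representation, together with the observation that the integrand ignores the signals, is exactly what licenses this. Care is also needed at the boundary cases $L = 0$ and $s = 0$, where the $\max\{\cdot,1\}$ convention must be checked against the coupling. As an alternative to quoting the BH identity, one could reproduce the martingale/optional-stopping argument used for Lemma~\ref{lemm:exact}, running the filtration over the backward index to obtain the same equality directly.
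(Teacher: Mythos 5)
Your proposal is correct and follows essentially the same route as the paper's proof: use the R\'enyi representation to show that the null statistics $(q^*_{s+1},\ldots,q^*_m)$ are jointly distributed as $m-s$ uniform order statistics, then invoke the exact-equality form of the Benjamini--Hochberg FDR identity (the paper cites \citet{benjamini1995controlling} and \citet{BY01} for what you derive via the per-null $\alpha/m$ identity). Your explicit coupling---placing the $s$ signal $p$-values at $0$ and noting that the FDR integrand depends on the nulls alone---is simply a careful unpacking of the step the paper dispatches with ``immediately implies.''
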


The name \emph{TailStop} emphasizes the fact that this procedure starts
scanning the test statistics from the back of the list, rather than from the
front. Scanning from the back allows us to adapt to the harmonic decay of the
null $p$-values without knowing the number $s$ of non-null predictors. An
analogue to \emph{ForwardStop} for this setup would be much more difficult to
implement, as we would need to estimate $s$ explicitly. We emphasize that
the guarantees from Theorem \ref{theo:tailStop} hold under the generative model
\eqref{eq:lasso_ideal}, whereas the covariance test statistics only have this distribution
asymptotically. However, in our simulation experiments, the asymptotic regime appears
to hold well enough for this not to be an issue.

\section{Model Selection Experiments}
\label{section:appsimulation}

\begin{figure}[t]
  \centering
  \includegraphics[width=\textwidth]{./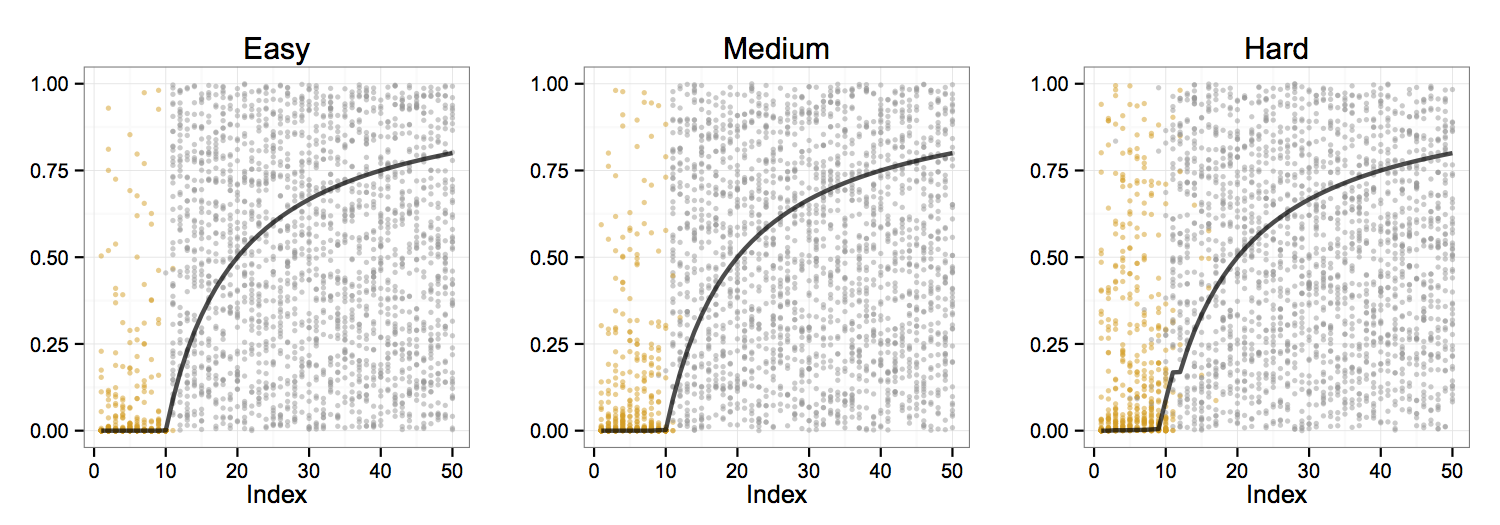}
  \caption{Observed $p$-values for $H_k^\inc$ in $50$ realizations of the spacing test \citep{taylor2014post} for
    least-angle regression.  $p$-values corresponding to non-null hypotheses are shown in orange, while those corresponding to null hypotheses are shown in gray.  The smooth black curve is the average proportion of null hypotheses up to the given index.  This example is similar to the Easy setting of the ordered hypothesis example of \textsection\ref{section:examples} in that the null and alternative are nearly perfectly separated.  However, in the least-angle regression setting the $p$-values under the alternative are highly variable and can be quite large, particularly in the Hard setting.   }
  \label{fig:lars_pvalues}
\end{figure}

In this section, we use the sequential procedures from \ref{section:seqtheory} for
pathwise model selection in sparse regression. As discussed in Section
\ref{section:application}, we focus on two particular problems: testing the incremental
null for least-angle regression with generic design (Section \ref{sec:gen_sim}), and
testing the complete null for the lasso with orthogonal design (Section \ref{sec:orthog_sim}).

The first of these two settings is of course more
immediately relevant to practice, and we verify that \emph{ForwardStop}
paired with the spacing test statistics of \citet{taylor2014post} performs well
on a real medical dataset. Meanwhile, the orthogonal simulations in Section
\ref{sec:orthog_sim} showcase the power boost that we can obtain from testing
the complete null instead of the incremental null. We believe that further
theoretical advances in the pathwise testing literature will enable us to have
similar power along with FDR guarantees in finite sample with generic $X$.

Finally, although our testing procedures are mathematically motivated by different
null hypotheses, namely the incremental and complete ones, we evaluate the performance
of each method in terms of its full-model false discovery rate (that is, the fraction of selected
variables that do not belong to the support of the true $\beta^*$). This lets us make a more
direct practical comparison between different methods.

\subsection{Testing the Incremental Null for Least-Angle Regression}
\label{sec:gen_sim}

\begin{figure}[t]
  \centering
  \includegraphics[width=0.3\textwidth]{./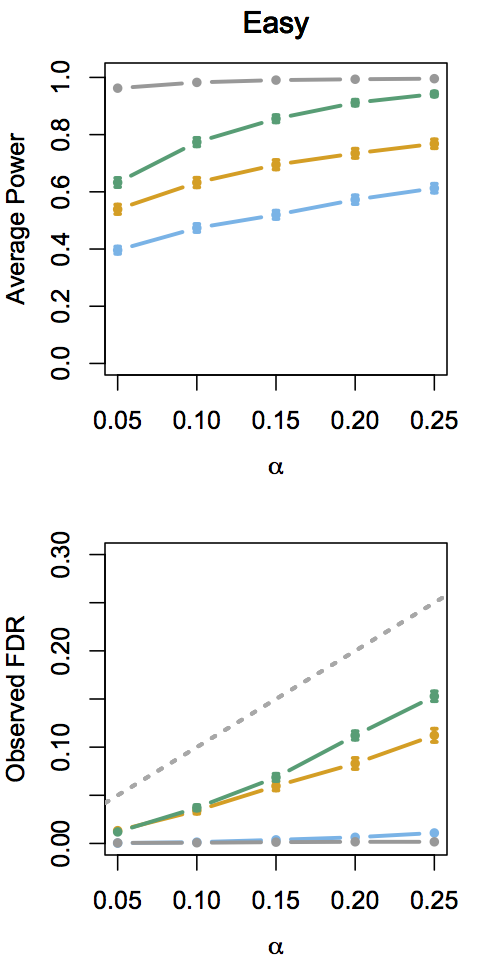} 
  \includegraphics[width=0.3\textwidth]{./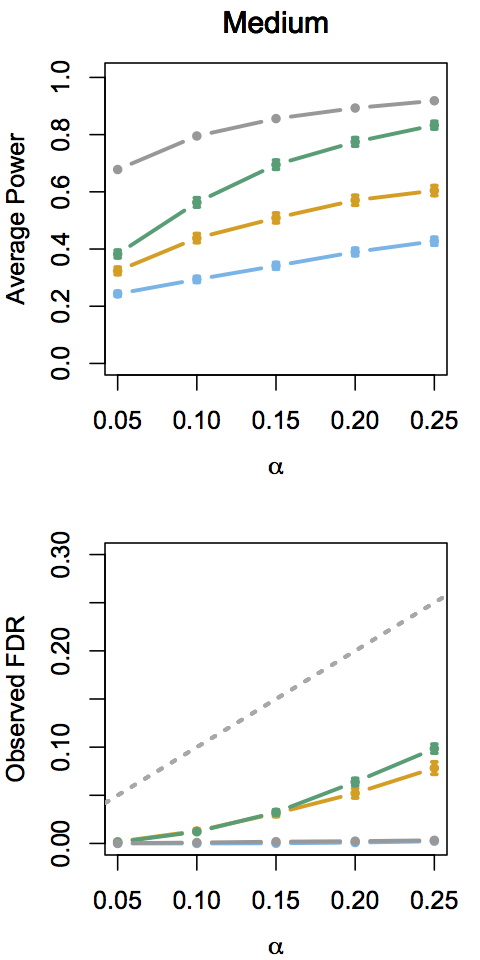}
  \includegraphics[width=0.3\textwidth]{./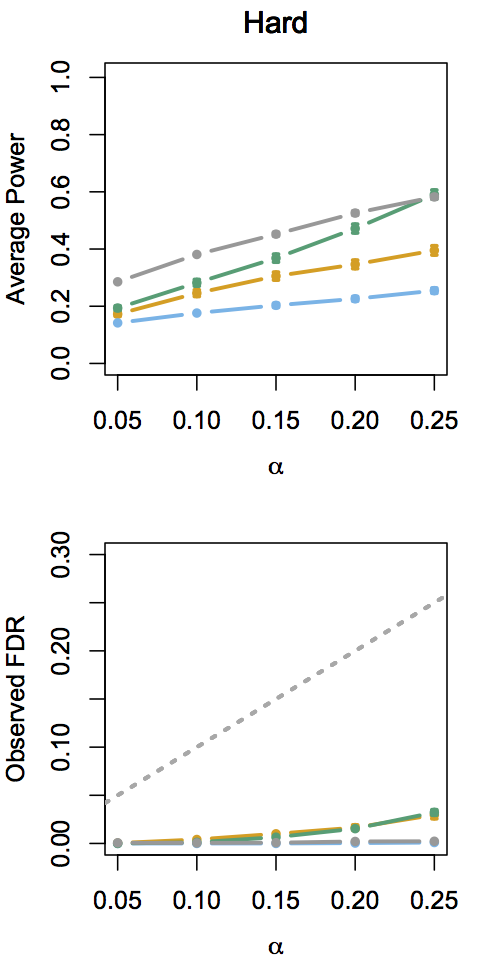} \\
  \includegraphics[width=0.75\textwidth]{./figures/main_legend.pdf}
  \caption{Average power and observed FDR level for the spacing test $p$-values for $H_k^\inc$ \citep{taylor2014post}. Even though there is nearly perfect separation between the null and alternative regions, the presence of large alternative $p$-values early in the path makes this a difficult problem.  \emph{StrongStop} attains both the highest average power and the lowest observed FDR across the simulation settings.  Unlike the other methods, \emph{StrongStop} scans $p$-values back-to-front, and is therefore able to perform well despite the occurrence of large $p$-values early in the path.}
  \label{fig:lars}
\end{figure}

 We compare the performance of \emph{ForwardStop}, \emph{StrongStop},
$\alpha$-investing and $\alpha$-thresholding on the spacing test statistics
from Section \ref{section:spacingstest}. We try three different simulation settings
with varying signal strength.  \emph{TailStop} is not included in this comparison
because it should only be used when the null test statistics exhibit harmonic
behaviour as in \eqref{eq:lasso_test}, whereas the spacing test $p$-values are uniform.  

In all three settings we have $n=200$ observations on $p=100$ variables of which $10$ are non-null, and standard normal errors on the observations.  The design matrix $X$ is taken to have iid Gaussian entries.  The non-zero entries of the parameter vector $\beta$ are taken to be equally spaced values from $2 \gamma$ to $\gamma\sqrt{2\log p}$, where $\gamma$ is varied to set the difficulty of the problem.  

Figure \ref{fig:lars_pvalues} shows $p$-values from 50 realizations of each simulation setting.  Note that while all three settings have excellent separation---meaning that least-angle regression selects most of the signal variables before admitting any noise variables---the $p$-values under the alternative can still be quite large.  Figure \ref{fig:lars} shows plots of average power and observed FDR level across the three simulation settings.  

\subsubsection{HIV Data}
\label{sec:hiv_data}

\begin{figure}[t]
  \centering
 \includegraphics[width=0.9\textwidth]{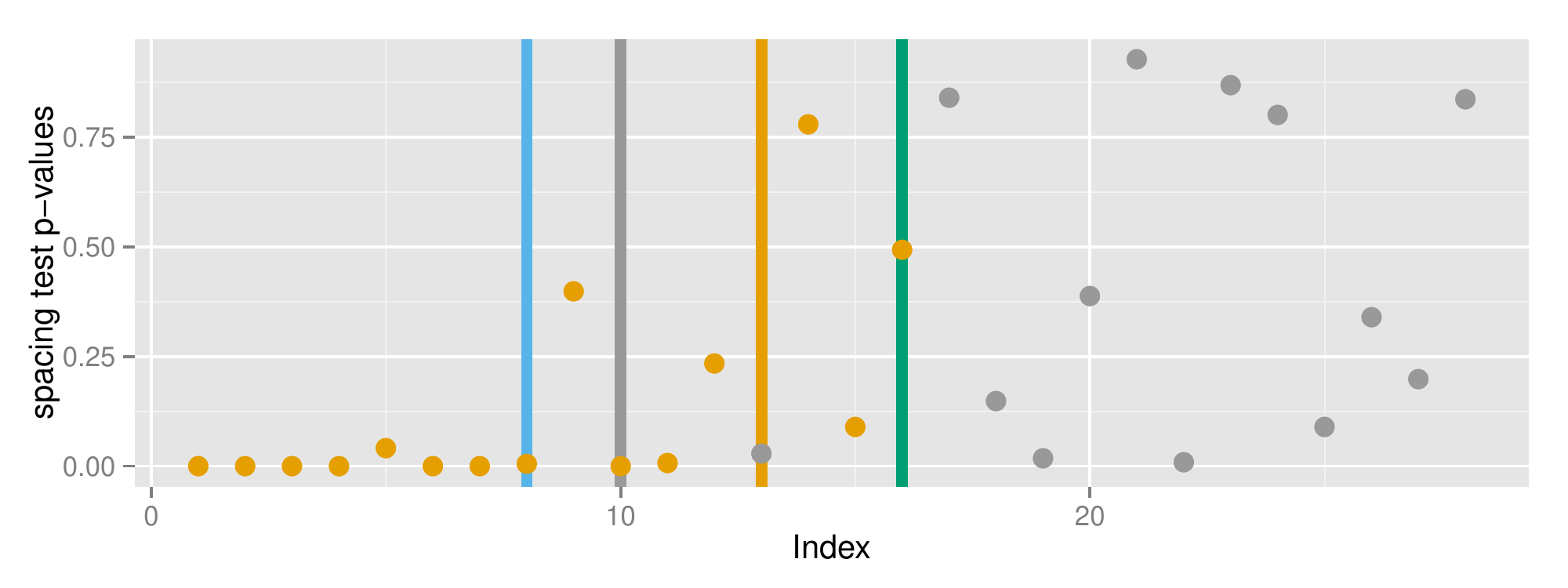}\\
 \includegraphics[width=0.75\textwidth]{./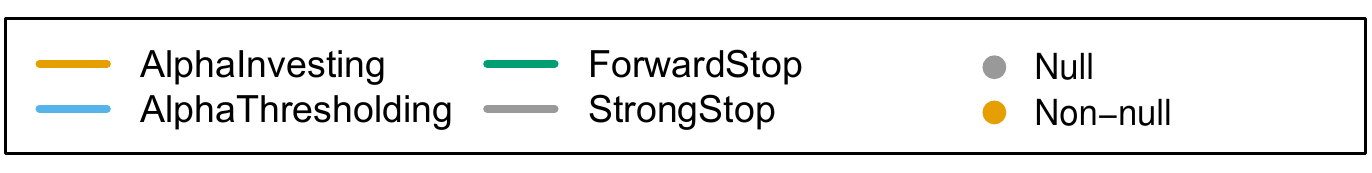}
  \caption{The $H_k^\inc$ $p$-values from the spacings test for the least-angle regression path, applied to the Abacavir resistance data of \citet{rhee2006genotypic}.  The vertical lines mark the stopping points of the four stopping rules, all with $\alpha=0.2$.  \emph{ForwardStop} selects the first 16 variables, even though the $p$-values at 9 and 14 are quite high.  All but one the selections made by \emph{ForwardStop} are considered meaningful by a previous study \citep{rhee2005hiv}.}
  \label{fig:dataexample}
\end{figure}

As a practical demonstration of our methods, we apply the same approach to the
Human Immunodeficiency Virus Type 1 (HIV-1) data of \citet{rhee2006genotypic},
which studied the genetic basis of HIV-1 resistance to several antiretroviral
drugs.  We focus on one of the accompanying data sets,
which measures the resistance of HIV-1 to six different Nucleoside RT
inhibitors (a type of antiretroviral drug) over 1005 subjects with mutations
measured at 202 different locations (after removing missing and duplicate
values).  The paper sought to determine which particular mutations were
predictive of resistance to these drugs.

In this section, we use least-angle regression to estimate a sparse linear model predicting drug
resistance from the mutation marker locations.  The \emph{ForwardStop}, \emph{StrongStop},
\emph{$\alpha$-investing}, and \emph{$\alpha$-thresholding} stopping rules are applied to the
$H_k^\inc$ $p$-values from the spacing test described in Section
\ref{section:spacingstest} to select a model along the least-angle regression path.  A previous
study of \citet{rhee2005hiv} provides a list of known relationships between
mutations and drug resistance, which allows partial assessment of the validity
of the selected variables.
This data set has also been studied by
\citet{barber2014controlling} in order to assess the performance of the knockoff filter
for variable selection; the main difference is that, unlike us, they do not constrain the
selection set to be the beginning of the least-angle regression path.

\newcommand{\wspp}{ {\;\;} }
\newcommand{\wsp}{\,}

\begin{table}
\caption{\label{tab:dataexample} Number of selections ($R$) made on the drug resistance data of \cite{rhee2006genotypic} using least-angle regression, the $p$-values from the spacings tests, and the four stopping rules (with $\alpha = 0.2$).  The number of the correctly selected mutation locations ($S$) is assessed using results from a previous study.  \emph{ForwardStop} and \emph{StrongStop} have the most competitive power, with the advantage varying by drug.  The abbreviations match those used in the original paper.}
\centering
  \begin{tabular}{|l||c|c||c|c||c|c||c|c||c|c||c|c|}
    \hline
 & \multicolumn{2}{c||}{\wspp\textbf{3TC}\wspp} & \multicolumn{2}{c||}{\wspp\textbf{ABC}\wspp} &
     \multicolumn{2}{c||}{\wspp\textbf{AZT}\wspp} & \multicolumn{2}{c||}{\wspp\textbf{D4T}\wspp} & 
         \multicolumn{2}{c||}{\wspp\textbf{DDI}\wspp} & \multicolumn{2}{c|}{\wspp\textbf{TDF}\wspp} \\
         \hline \hline
 \textbf{Rule} & \wsp $R$ \wsp & \wsp $S$ \wsp & \wsp $R$ \wsp & \wsp $S$ \wsp & \wsp $R$ \wsp & \wsp $S$ \wsp &
\wsp $R$ \wsp & \wsp $S$ \wsp & \wsp $R$ \wsp & \wsp $S$ \wsp & \wsp $R$ \wsp & \wsp $S$ \wsp  \\
 \hline \hline
    ForwardStop       & 4 & 4 & 16 & 15 & 4 & 4 & 18  & 14  & 3  & 3  & 6 & 6  \\
    StrongStop        & 4 & 4 & 10 & 10 & 8 & 8 & 10  & 9   & 12 & 12 & 6 & 6  \\
    $\alpha$-Thresholding & 4 & 4 & 8  & 8  & 4 & 4 & 10  & 9   & 3  &  3 & 2 & 2  \\
    $\alpha$-Investing    & 4 & 4 & 13 & 12 & 4 & 4 & 21  & 14  & 3  &  3 & 2 & 2 \\
    \hline
  \end{tabular}
\end{table}

Table \ref{tab:dataexample} shows the number of rejections and number of
correct rejections for each method applied to each of the six drug resistance
outcomes.  For illustration, we plot the $p$-values and stopping points for
resistance to Abacavir (ABC) in Figure \ref{fig:dataexample}.  The theory supporting
\emph{ForwardStop} and \emph{StrongStop} suggest that the selected models should contain
no more than 20\% false positives in expectation.  The
information available from the literature supports the validity of the
selections, showing that the variables selected by our procedures largely
corresponded to meaningful relationships based on previous studies conducted
with independent data.  We observe that, while all methods appear to achieve
overall FDR control, in each case either \emph{ForwardStop} or
\emph{StrongStop} yield the highest number of correct rejections.

\subsection{Testing the Complete Null for the Lasso with Orthogonal $X$}
\label{sec:orthog_sim}

In this section, we compare the performance of \emph{StrongStop},
\emph{ForwardStop}, $\alpha$-investing, $\alpha$-thresholding, as well
as \emph{TailStop} for testing the complete null for the lasso with orthogonal $X$
using the covariance test statistics of \citet{lockhart2013significance}.
As discussed in Section \ref{section:covtestbackground:lasso}, these test statistics
exhibit a harmonic behavior that \emph{TailStop} is designed to take advantage of.
 All other procedures operate
on conservative $p$-values, $p_j = \exp(-T_j)$, obtained by bounding the null
distributions by $\Exp(1)$.

\begin{figure}[t]
  \centering
  \includegraphics[width=\textwidth]{./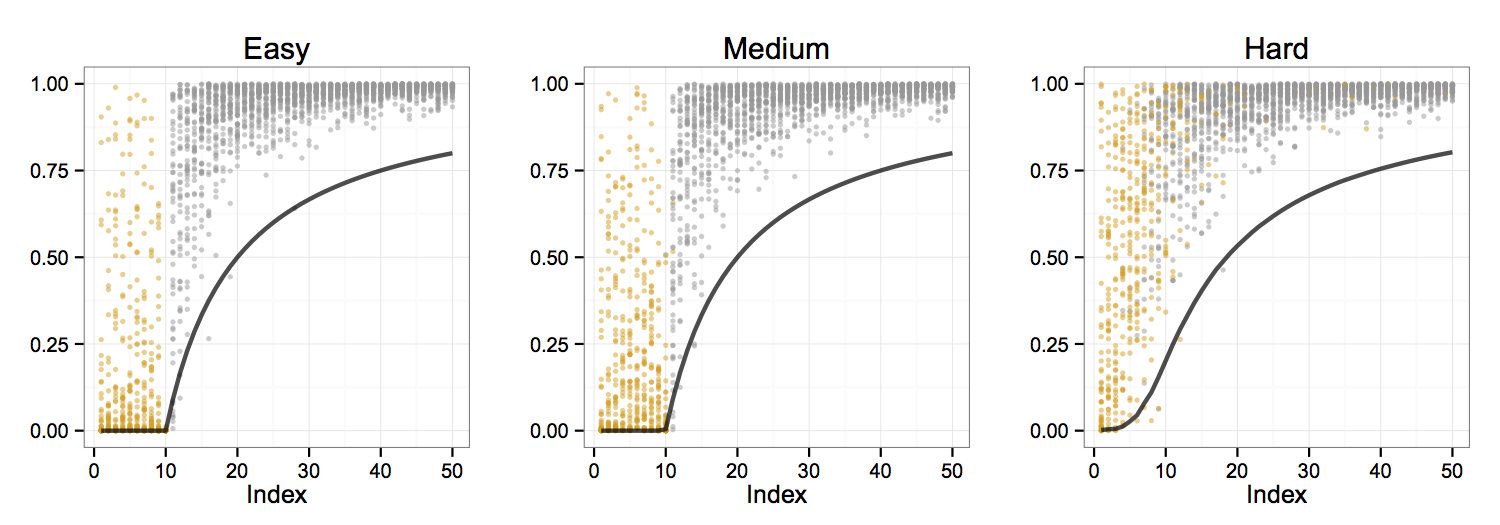}
  \caption{Observed $p$-values for $50$ realizations of the covariance test
    \citep{lockhart2013significance} for $H_k^\comp$ with orthogonal $X$.
    $p$-values corresponding to non-null hypotheses are shown in orange, while
    those corresponding to null hypotheses are shown in gray.  The smooth black
    curve is the average proportion of null hypotheses up to the given index.
    Note that these $p$-values behave very differently from those in the
    ordered hypothesis example presented in \textsection
    \ref{section:examples}.  The null $p$-values here exhibit $\Exp(1/\ell)$
    behaviour, as described in
    \textsection\ref{section:covtestbackground:lasso}.  Note that the non-null
    $p$-values for this test can be quite large on occasion.  TailStop performs
    well in part because it is not sensitive to the presence of some large
    non-null $p$-values.
  }
  \label{fig:ortho_lasso_pvalues}
\end{figure}

We consider three scenarios which we once again refer to as easy, medium and hard.  In all of the settings we have $n=200$ observations on $p=100$ variables of which $10$ are non-null, and standard normal errors on the observations.  The non-zero entries of the parameter vector $\beta$ are taken to be equally spaced values from $2 \gamma$ to $\gamma\sqrt{2\log p}$, where $\gamma$ is varied to set the difficulty of the problem.  Figure \ref{fig:ortho_lasso_pvalues} shows $p$-values from 50 realizations of each simulation setting; they exhibit harmonic behavior as described in Section \ref{section:covtestbackground:lasso}.

Figure \ref{fig:ortho} shows plots of average power and observed FDR level across the three simulation settings.  The superior performance of \emph{TailStop} is both desirable and expected, as it is the only rule that can take advantage of the rapid decay of the test statistics in the null.

\begin{figure}[t]
  \centering
  \includegraphics[width=0.3\textwidth]{./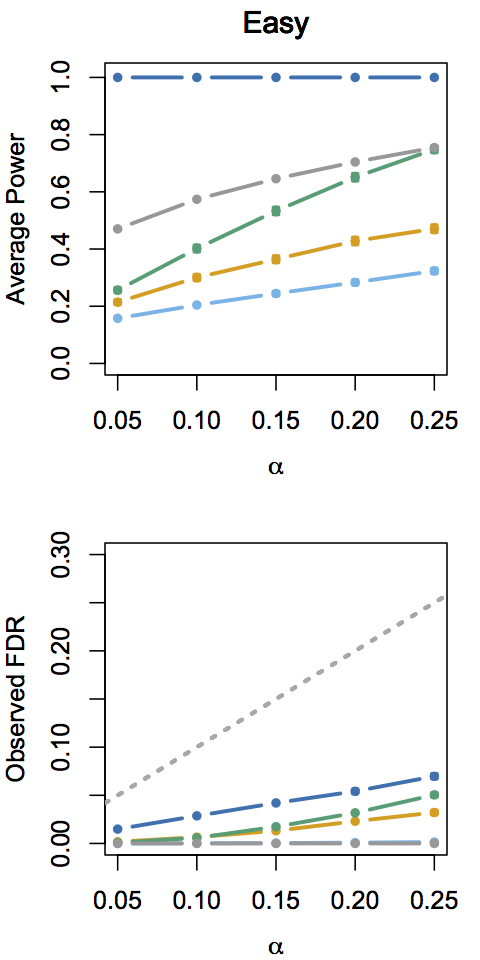} 
  \includegraphics[width=0.3\textwidth]{./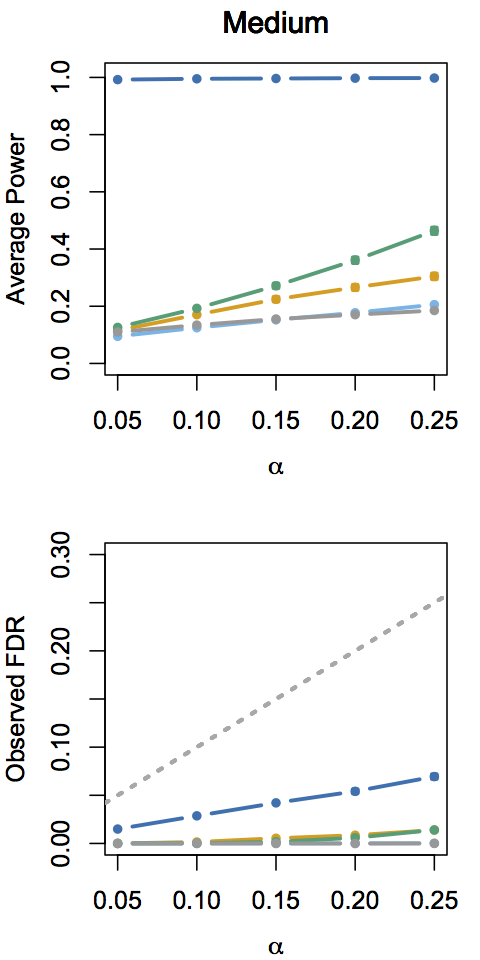}
  \includegraphics[width=0.3\textwidth]{./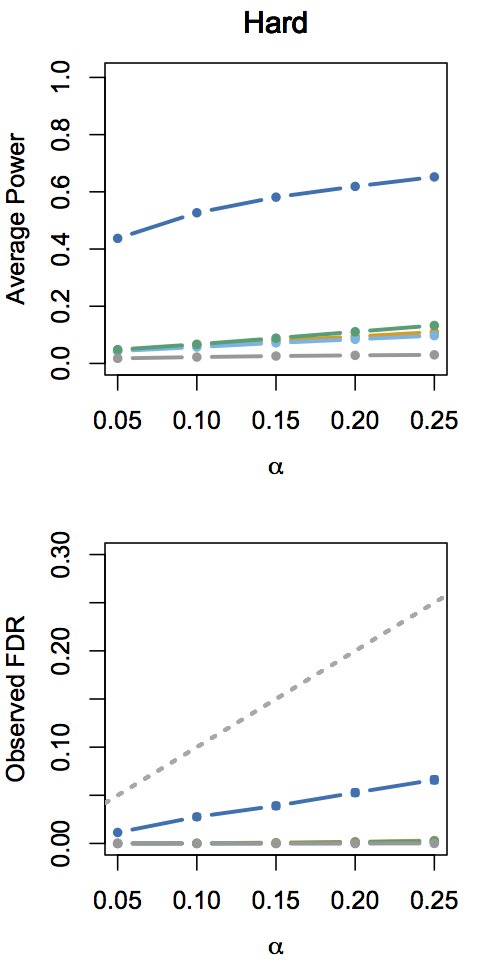} \\
  \includegraphics[width=0.75\textwidth]{./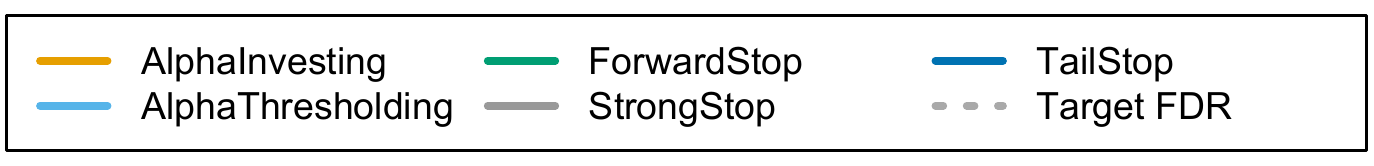}
  \caption{Average power and observed FDR level for the orthogonal lasso using
  the covariance test of \citet{lockhart2013significance} for $H_k^\comp$.  In the bottom
  panels, we see that all methods control the FDR.  However, in the medium and hard settings \emph{TailStop} is the only method that shows sensitivity to the choice of target $\alpha$ level.  All other methods have an observed FDR level that's effectively $0$, irrespective of the target $\alpha$.  From the power plots we also see that \emph{TailStop} has far higher power than
the other procedures --- in the medium setting at low $\alpha$ the power is almost 10 times higher than any other method.  By taking advantage of the
  $\Exp(1/\ell)$ behaviour of the null p-values, \emph{TailStop} far
  outperforms the other methods in power across all the difficulty settings. }
  \label{fig:ortho}
\end{figure}

\section{Conclusions}
\label{section:conc}

We have introduced a new setting for multiple hypothesis testing that is
motivated by sequential model selection problems.  In this setting, the
hypotheses are ordered, and all rejections are required to lie in an initial
contiguous block.  Because of this constraint, existing multiple testing
approaches do not control criteria like the False Discovery Rate (FDR).

We proposed a pair of procedures for testing in
this setting, denoted by \emph{ForwardStop} and \emph{StrongStop}.
We proved that these procedures control FDR at a specified
level while respecting the required ordering of the rejections.  Two procedures
were proposed because they provide different advantages. \emph{ForwardStop} is simple and robust to
assumptions on the particular behavior of the null distribution.
Meanwhile, when the null distribution is dependable, \emph{StrongStop}
controls not only FDR, but the Family-Wise Error Rate (FWER).  
We then applied our methods to model selection, and provided a modification
of \emph{StrongStop}, called \emph{TailStop}, which takes advantage of the
harmonic distributional guarantees that are available in some of those
settings.  

A variety of researchers are continuing to work on developing stepwise
distributional guarantees for a wide range of model selection problems.  As
many of these procedures are sequential in nature, we hope that the stopping
procedures from this paper will provide a way to convert these stepwise guarantees
into model selection rules with accompanying inferential guarantees.

There are many important challenges for future work.
For exact control of FDR or FWER, our methods require that the null $p$-values be independent. 
Except under orthogonal design, this is not true for any of the existing sequential $p$-value procedures that we are aware of.
Further work is need in extending our theory, and/or developing new sequential regression tests that yield independence under the null.

\section* {Acknowledgment}

M.G. and S.W. contributed equally to this paper.
The authors are grateful for helpful conversations with William Fithian and Jonathan Taylor,
and to the editors and referees for their constructive comments and suggestions.
M.G., S.W. and A.C. are respectively supported by
a NSF GRFP Fellowship,
a B.C. and E.J. Eaves SGF Fellowship, and
a NSERC PGSD Fellowship;
R.T. is supported by
NSF grant DMS-9971405 and NIH grant N01-HV-28183.
Most of this work was performed while M.G. and A.C. were
at the Stanford Statistics Department.

\bibliographystyle{Chicago}
\bibliography{references,tibs}

\appendix

\setcounter{theo}{4}

\section{Proofs}\label{section:appendixproof}

\begin{proof}[Lemma \ref{lemm:exact}]
We can map any rejection threshold $t$ to a number of rejections $k$. For the purpose of this proof, we will frame the problem as how to choose a rejection threshold $\hatt$; any choice of $\hatt \in [0, 1]$ immediately leads to a rule
$$\hkF = R(\hatt) = \left|\{i : q_i \leq \hatt\}\right|.$$
Similarly, the number of false discoveries is given by $V(\hatt) = \left|\{i > s : q_i \leq \hatt\}\right|$. We define the threshold selection rule
$$ \hatt_\alpha = \max\left\{t \in [0, 1] : t \leq \frac{\alpha \, R(t)}{m} \right\}.$$
Here, $R(\hatt_\alpha) = \hkF$ and so this rule is equivalent to the one defined in the hypothesis.

When coming in from $0$, $R(t)$ is piecewise continuous with upwards jumps, so
$$ \hatt_\alpha = \frac{\alpha \, R(\hatt_\alpha)}{m}, $$
allowing us to simplify our expression of interest:
$$\frac{V(\hatt_\alpha)}{R(\hatt_\alpha)} = \frac{\alpha}{m} \, \frac{ V(\hatt_\alpha)}{\hatt_\alpha}. $$
Thus, in order to prove our result, it suffices to show that 
$$ \EE{\frac{ V(\hatt_\alpha)}{\hatt_\alpha}} \leq m. $$
The remainder of this proof establishes the above inequality using R\'enyi representation and a martingale argument due to \citet{storey2004strong}.

Recall that, by assumption, $p_{s+1}, \, ..., \, p_m \simiid U([0, 1])$. Thus, we can use R\'enyi representation to show that
\begin{align*}
\left(Z_{s+1} - Z_s, \, ..., \, Z_m - Z_s\right)
&= \left( \frac{Y_{s+1}}{m - s}, \, ..., \, \sum_{i = s+1}^m \frac{Y_i}{m - i + 1} \right)\\
&\eqd \left(E_{1, \, m - s}, \, ..., \, E_{m - s, \, m - s} \right),
\end{align*}
where the $E_{i, \, m - s}$ are standard exponential order statistics, and so
$$ \left(e^{-(Z_{s+1} - Z_s)}, \, ..., \, e^{-(Z_m - Z_s)}\right) $$
are distributed as $m - s$ order statistics drawn from the uniform $U([0, 1])$ distribution. Recalling that
$$ 1 - q_{s + i} =  (1 - q_s) \, e^{-(Z_{s + i} - Z_s)}, $$
we see that $q_{s+1}$, ..., $q_m$ are distributed as uniform order statistics on $[q_s, \, 1]$.

Because the last $q_i$ are uniformly distributed,
$$ {M}(t) = \frac{V(t)}{t} $$
is a martingale on $(q_s, \, 1]$
with time running backwards. Here, the relevant filtration
$\ff_t$ tells us which of the $q_i$ are strictly greater than $t$; we can
also verify that $\hatt_{\alpha}$ is a stopping time with respect to
this backwards-time filtration.
Now, let ${M}^+(t)$,  $\hatt^+_{\alpha}$, and $\ff_t^+$ be the
right-continuous modifications of the previous quantities (again, with respect
to backwards-running time). By the optional sampling theorem
$$ \EE{\min \{M^+(\hatt^+_\alpha), \, C\} ; \hatt^+_\alpha > q_s} \leq M(1) = \frac{m - s}{1 - q_s} $$
for any $C \geq 0$; thus, by the (Lebesgue) monotone convergence theorem,
$$ \EE{M^+(\hatt^+_\alpha) ; \hatt^+_\alpha > q_s} \leq \frac{m - s}{1 - q_s} $$
Moreover, we can verify that 
$$  \EE{M^+(\hatt^+_\alpha) ; \hatt^+_\alpha > q_s} =  \EE{M(\hatt_\alpha) ; \hatt_\alpha > q_s}, $$
almost surely, and so
$$ \EE{M(\hatt_\alpha) ; \hatt_\alpha > q_s} \leq  \frac{m - s}{1 - q_s}. $$
For all $t > q_s$,
$$ \frac{V(t)}{t} = \frac{t - q_s}{t} M(t) \leq (1 - q_s) \, M(t), $$
and so 
$$ \EE{\frac{V(\hatt_\alpha)}{\hatt_\alpha} ; \hatt_\alpha > q_s} \leq m - s. $$
Meanwhile, 
$$ \EE{\frac{V(\hatt_\alpha)}{\hatt_\alpha} \big| \hatt_\alpha \leq q_s} = 0,
\text{ and so, as claimed, }
 \EE{\frac{V(\hatt_\alpha)}{\hatt_\alpha}} \leq m. $$
 \hfill \qed
\end{proof}

We begin our analysis of \emph{ForwardStop} (Procedure \ref{proc:forward}) by showing that
it satisfies the same guarantees as the stopping rule \eqref{eq:exact}. Although the following
corollary is subsumed by Theorem \ref{theo:forward}, its simple proof can still be helpful for understanding
the motivation behind \emph{ForwardStop}.

\begin{coro}
\label{coro:limit}
Under the conditions of Lemma \ref{lemm:exact}, 
the {\em ForwardStop} procedure defined in (\ref{eq:forward})
has FDR is controlled at level $\alpha$.
\end{coro}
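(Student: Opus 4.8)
The plan is to realize \emph{ForwardStop} as the $M \to \infty$ limit of the exact R\'enyi rule $\hkF^q$ of Lemma \ref{lemm:exact}, and to push the FDR bound through that limit. Concretely, I would keep the $m$ given $p$-values (the first $s$ non-null and treated as constants, the last $m-s$ i.i.d.\ uniform) and \emph{augment} the list with an arbitrary number of extra i.i.d.\ $U([0,1])$ nulls $p_{m+1},\dots,p_M$. For each $M\ge m$ the augmented list satisfies the hypotheses of Lemma \ref{lemm:exact} with total count $M$ and the same $s$ non-nulls, so the exact rule $\hkF^{q,M}=\max\{k\le M: q_k^M\le \alpha k/M\}$, built from $Z_k^M=\sum_{j\le k}Y_j/(M-j+1)$ and $q_k^M=1-e^{-Z_k^M}$, controls the FDR at level $\alpha$.

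Next I would take $M\to\infty$ at the level of the decision boundary. For each fixed $k\le m$ the weights satisfy $M/(M-j+1)\to 1$, so $Z_k^M\to 0$ and, using $\tfrac{1-e^{-x}}{x}\to 1$, $M\,q_k^M=M(1-e^{-Z_k^M})\to \sum_{j=1}^k Y_j$. Hence the exact acceptance condition $q_k^M\le \alpha k/M$, i.e.\ $M q_k^M\le \alpha k$, converges to the \emph{ForwardStop} condition $\sum_{j\le k}Y_j\le \alpha k$. Since only finitely many indices $k\le m$ are involved and the boundary $\sum_{j\le k}Y_j=\alpha k$ is hit with probability zero (the relevant $Y_j$ are continuous for $k>s$, while indices $k\le s$ contribute nothing to the false-discovery count), the restricted rule $\hat\kappa_M:=\max\{k\le m: q_k^M\le \alpha k/M\}$ equals $\hkF$ for all $M$ large enough, almost surely.

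The crucial comparison step is to dominate the FDP of $\hat\kappa_M$ by that of the full augmented rule. Writing $f(x)=(x-s)_+/\max\{x,1\}=\max\{0,\,1-s/x\}$ for the false-discovery proportion of an initial block of size $x$ under the separation assumption, $f$ is nondecreasing. Because $\hat\kappa_M$ and $\hkF^{q,M}$ use the same threshold but $\hat\kappa_M$ maximizes only over $k\le m$, we have $\hat\kappa_M\le \hkF^{q,M}$ pointwise, and therefore $f(\hat\kappa_M)\le f(\hkF^{q,M})$. Taking expectations and invoking Lemma \ref{lemm:exact} gives $\EE{f(\hat\kappa_M)}\le \EE{f(\hkF^{q,M})}\le \alpha$ for every $M$. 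Finally, since $f$ is bounded in $[0,1]$ and $f(\hat\kappa_M)\to f(\hkF)$ almost surely, bounded convergence yields $\EE{f(\hkF)}=\lim_{M\to\infty}\EE{f(\hat\kappa_M)}\le \alpha$, which is the claim.

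The step I expect to be the main obstacle is controlling the augmented rule in the appended tail: a priori the extra nulls could make $\hkF^{q,M}$ reject far beyond index $m$, so one cannot naively identify $\hkF^{q,M}$ with $\hkF$. This is exactly what the monotonicity of $f$ resolves---rejecting deeper into the all-null tail only \emph{raises} the false-discovery proportion, so restricting the rule to the first $m$ hypotheses is conservative and the Lemma \ref{lemm:exact} bound survives the restriction. It is precisely here that the separation hypothesis (all non-nulls preceding all nulls) is used; relaxing it to interspersed nulls is what forces the separate, more delicate argument of Theorem \ref{theo:forward}.
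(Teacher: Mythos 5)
Your proposal follows essentially the same route as the paper's own proof: append i.i.d.\ $U([0,1])$ nulls to the list, apply Lemma \ref{lemm:exact} to the augmented list, pass to the limit using $M q_k^{M} \to \sum_{j \leq k} Y_j$, and finish with bounded/dominated convergence. The one step you add beyond the paper is the domination argument via monotonicity of $k \mapsto (k-s)_+ / \max\{k, 1\}$, and it is a genuine improvement: the paper simply asserts $\lim_{m^* \to \infty} \hkF^{q,m^*} = \hkF$, glossing over the fact that the augmented rule maximizes over all $k \leq m^*$ and can therefore stop inside the appended null tail with non-vanishing probability, so it does not literally converge to the rule restricted to $k \leq m$. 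Your observation that truncating the maximization to $k \leq m$ can only lower the false discovery proportion (precisely because, under the separation hypothesis, the FDP is nondecreasing in the stopping index) is exactly what makes that identification harmless, so your write-up is more careful than the paper's at the one point where the argument is delicate.
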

\begin{proof}
We can extend our original list of $p$-values $p_1$, ..., $p_m$ by appending additional terms
$$ \tp_{m+1}, \, \tp_{m+2}, \,  ..., \, \tp_{m^*} \simiid U([0,1]) $$
to it. This extended list of $p$-values still satisfies the conditions of Lemma \ref{lemm:exact}, and so we can apply procedure \eqref{eq:exact} to this extended list without losing the FDR control guarantee:
$$ \hkF^{q, m^*} = \max\left\{k : \frac{m^* q_k^{m^*}}{k} \leq \alpha \right\}. $$
As we take $m^* \rightarrow \infty$, we have
$$ \lim_{m^* \rightarrow \infty} m^* q_k^{m^*} = \lim_{m^* \rightarrow \infty} m^* \left(1 - \exp\left[-\sum_{j = 1}^k \frac{Y_j}{m^* - j + 1}\right]\right) = \sum_{j = 1}^k Y_j, $$
and so, because the set $[0, \, \alpha]$ is closed, we recover the procedure described in the hypothesis:
$$ \lim_{m^* \rightarrow \infty} \hkF^{q,m^*} = \hkF. $$
Thus, by dominated convergence, the rule $\hkF$ controls the FDR at level $\alpha$. \hfill\qed
\end{proof}

\begin{proof}[Theorem \ref{theo:forward}]

The proof of Lemma \ref{lemm:exact} used quantities
$$ Z_i = \sum_{j = 1}^i \frac{Y_j}{m - j + 1} = \sum_{j = 1}^i \frac{Y_j}{|\{l \in \{j, \, ..., \, m\}\}|} $$
to construct the sorted test statistics $q_i$. The key difference between the setup of Lemma \ref{lemm:exact} and our current setup is that we can no longer assume that if the $i^{th}$ hypothesis is null, then all subsequent hypotheses will also be null.

In order to adapt our proof to this new possibility, we need to replace the $Z_i$ with
$$ Z_i^{ALT} = \sum_{j = 1}^i \frac{Y_j}{\nu(j)}, \quad \text{where } \nu(j) = |\{l \in \{j, \, ..., \, m\} : l \in N\}|, $$
and $N$ is the set of indices corresponding to null hypotheses. Defining
$$ q_i^{ALT} = 1 - e^{-Z_i^{ALT}}, $$
we can use R\'enyi representation to check that these test statistics have distribution
\begin{align*}
&1 - q_i^{ALT} \eqd r(i) \, \left(1 -U_{\nu(i), \, |N|}\right), \text{ where} \\
&r(i) := \exp\left[-\sum_{\{j \leq i : j \notin N\}} \frac{Y_j}{i} \right]
\end{align*}
and the $U_{\nu(j), \, |N|}$ are order statistics of the uniform $U([0, 1])$ distribution. Here $r(i)$ is deterministic in the sense that it only depends on the location and position of the non-null $p$-values.

If we base our rejection threshold $\hatt^{ALT}_\alpha$ on the $q_i^{ALT}$, then by an argument analogous to that in the proof of Lemma \ref{lemm:exact}, we see that
$$ \frac{V\left(\hatt^{ALT}_\alpha\right)}{\hatt^{ALT}_\alpha} $$
is a sub-martingale with time running backwards. The key step in showing this is to notice is that, now, the decay rate of $V(t)$ is accelerated by a factor $r^{-1}(i) \geq 1$. Thus, the rejection threshold $\hatt^{ALT}_\alpha$ controls FDR at level $\alpha$ in our new setup where null and non-null hypotheses are allowed to mix.

Now, of course, we cannot compute the rule $\hatt^{ALT}_\alpha$ because the $Z_i^{ALT}$ depend on the unknown number $\nu(j)$ of null hypotheses remaining. However, we can apply the same trick as in the proof of Corollary \ref{coro:limit}, and append to our list an arbitrarily large number of $p$-values that are known to be null. In the limit where we append infinitely many null $p$-values to our list, we recover the \emph{ForwardStop} rejection threshold. Thus, by dominated convergence, \emph{ForwardStop} controls the FDR even when null and non-null hypotheses are interspersed. \hfill \qed
\end{proof}

\begin{proof}[Theorem \ref{theo:strong}]

We begin by considering the global null case. In this case, the $\tY_i$ are all standard exponential, and so by R\'enyi representation the $\tq_i$ are distributed as the order statistics of a uniform $U([0, 1])$ random variable. Thus, under the global null, the rule $\hkS$ is just Simes' procedure \citep{simes1986improved} on the $\tq_i$. Simes' procedure is known to provide exact $\alpha$-level control under the global null, so \eqref{eq:strong_guarantee} holds as an equality under the global null.

Now, consider the case where the global null does not hold. Suppose that we have $\hkS = k > s$. From the definition of $\tq_k$, we see that $\tq_k$ depends only on $p_k, \, ..., \, p_m$, and so the event $\tq_k \leq \alpha k/m$ is just as likely under the global null as under any alternative with less than $k$ non-null $p$-values. Thus, conditional on $s$,
$$ \sum_{k = s+ 1}^m \PP{\hkS = k \big| \text{alternative}}  = \sum_{k = s+ 1}^m \PP{\hkS = k \big| \text{null}} \leq \alpha, $$
and so the discussed procedure in fact provides strong control. \hfill \qed
\end{proof}

\begin{proof}[Theorem \ref{theo:tailStop}]
Let $Z^*_i = \sum_{j = i}^m T_i$. By R\'enyi representation,
$$ \left( Z^*_{s+1}, \, ..., \, Z^*_m\right) \sim \left(E_{m - s, \, m - s}, \, ..., \, E_{1, \, m - s} \right), $$
where the $E_{i, \, j}$ are exponential order statistics. Thus, the null test statistics
$$ \left(q^*_{s + 1}, \, ..., \, q^*_m \right) $$
are distributed as $m - s$ order statistics drawn from the uniform $U([0, 1])$ distribution. The result of \citet{benjamini1995controlling} immediately implies that we can achieve FDR control by applying the BH procedure to the $q_i^*$, and so \emph{TailStop} controls the FDR. The exact equality follows from the result of \citet{BY01}. \hfill \qed
\end{proof}

\section{Additional Simulations}\label{section:appendixsim}

In this section we revisit the ordered hypothesis example introduced in Section \ref{section:examples} and present the results of a more extensive simulation study.  We explore the following perturbations of the problem:
\begin{enumerate}[(a)] \setlength{\itemsep}{-0.2em}
  \item Varying signal strength while holding the level of separation fixed. (Figures \ref{fig:cp_appendix_perfect}, \ref{fig:cp_appendix_good}, \ref{fig:cp_appendix_moderate})
  \item Increasing the number of hypotheses while retaining the same proportion of non-null hypotheses (Figure \ref{fig:cp_appendix_diffn})
  \item Varying the proportion of non-null hypotheses (Figures \ref{fig:cp_appendix_perfect_pi0}, \ref{fig:cp_appendix_good_pi0}, 
  \ref{fig:cp_appendix_moderate_pi0})
\end{enumerate}
We remind the reader of the three simulation settings introduced in \ref{section:examples}, which we termed Easy, Medium and Hard.  These settings were defined as follows
\begin{description} \setlength{\itemsep}{-0.2em}
    \item[Easy] Perfect separation (all alternative precede all null), and strong signal ($\mathrm{Beta}(1,23)$)
    \item[Medium]  Good separation (mild intermixing of hypotheses), and moderate signal ($\mathrm{Beta}(1,14)$)
    \item[Hard] Moderate separation (moderate intermixing hypotheses), and low signal ($\mathrm{Beta}(1,8)$)
\end{description}
All results are based on $2000$ simulation iterations.  Unless otherwise specified, the simulations are carried out with $m=100$ total hypotheses of which $s=20$ are non-null.  

\begin{figure}[p]
  \centering
  \includegraphics[width=0.75\textwidth]{./figures/main_legend.pdf} \\
  \includegraphics[width=0.29\textwidth]{./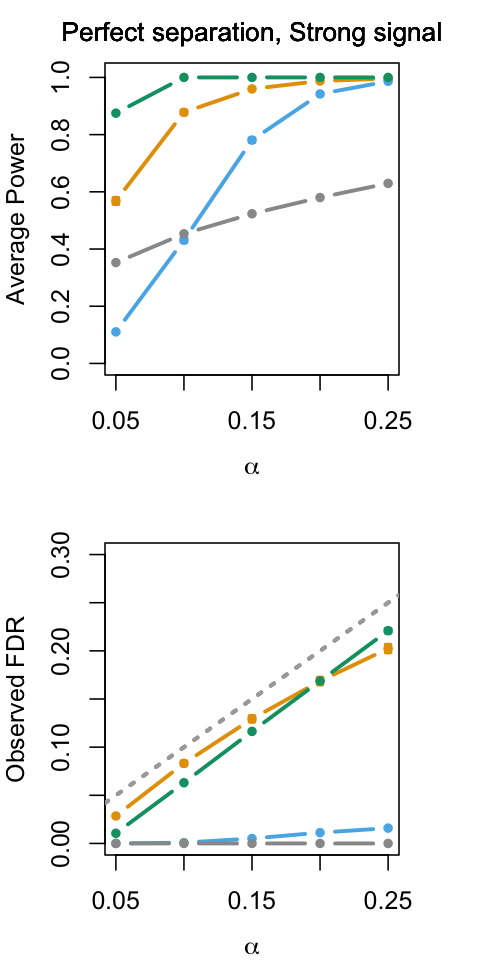} 
  \includegraphics[width=0.29\textwidth]{./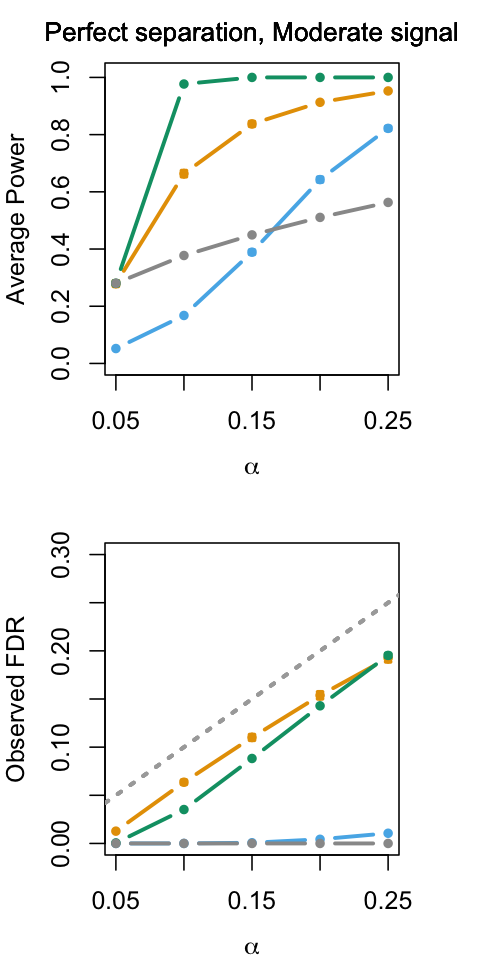}
  \includegraphics[width=0.29\textwidth]{./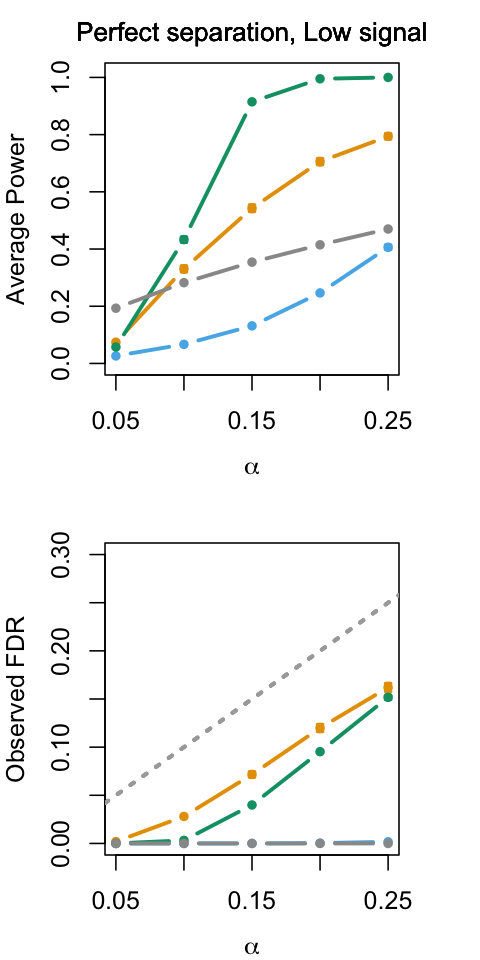}
  \caption{Effect of signal strength on stopping rule performance: Perfect separation regime. \emph{ForwardStop} remains the best performing method overall, except at the lowest $\alpha$ level in the moderate and low signal regimes.  All of the methods become more conservative as the signal strength decreases.}
  \label{fig:cp_appendix_perfect}
\end{figure}

\begin{figure}[p]
  \centering
  \includegraphics[width=0.3\textwidth]{./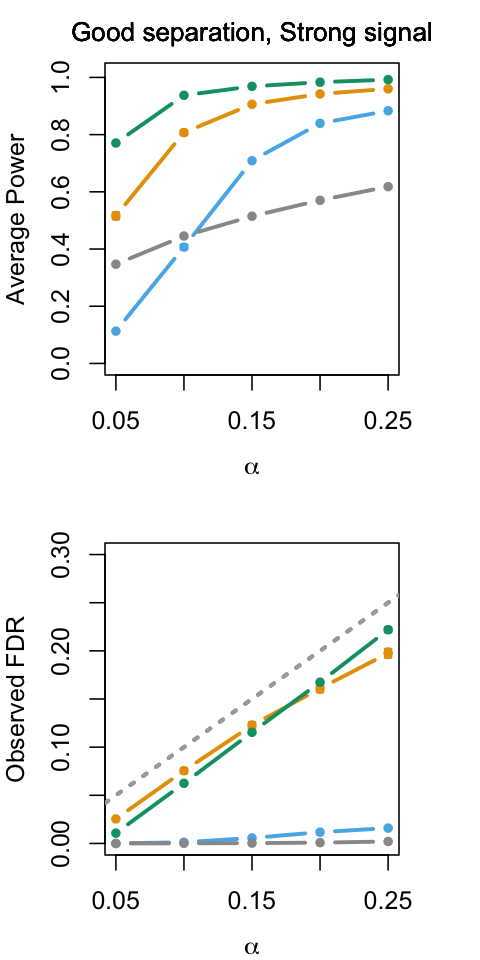} 
  \includegraphics[width=0.3\textwidth]{./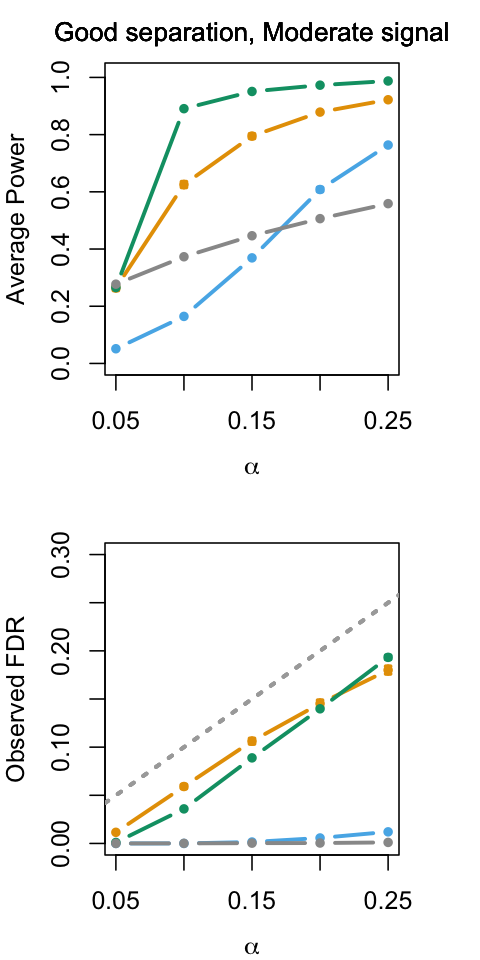}
  \includegraphics[width=0.3\textwidth]{./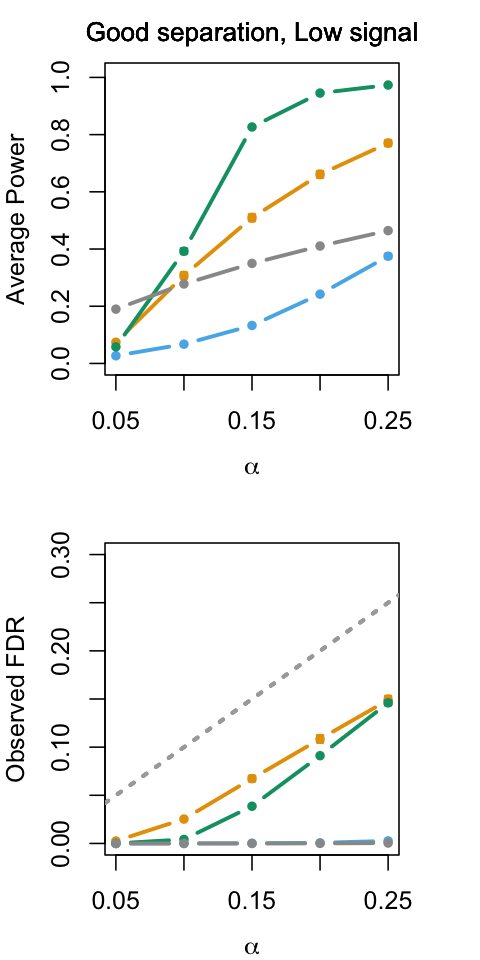}
  \caption{Effect of signal strength on stopping rule performance: Good separation regime. The effect of signal strength is qualitatively the same as in the perfect separation regime.  }
  \label{fig:cp_appendix_good}
\end{figure}

\begin{figure}[p]
  \centering
  \includegraphics[width=0.3\textwidth]{./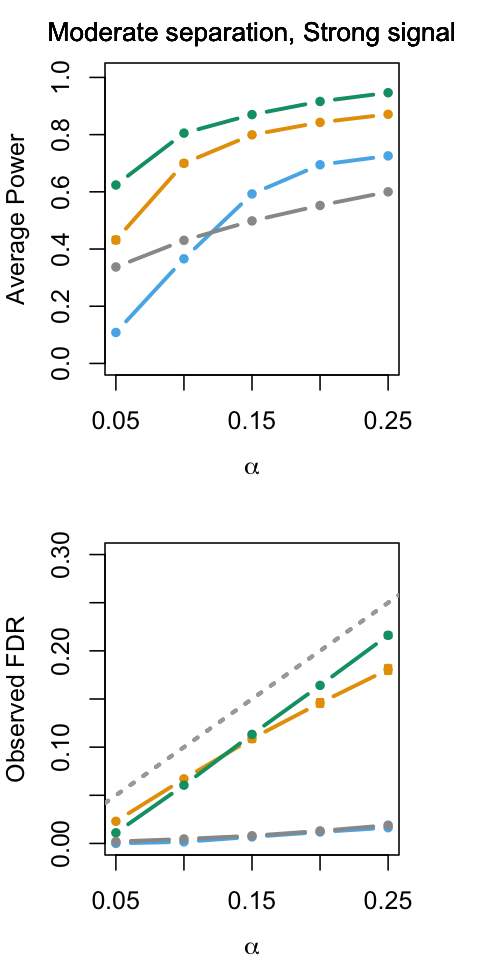} 
  \includegraphics[width=0.3\textwidth]{./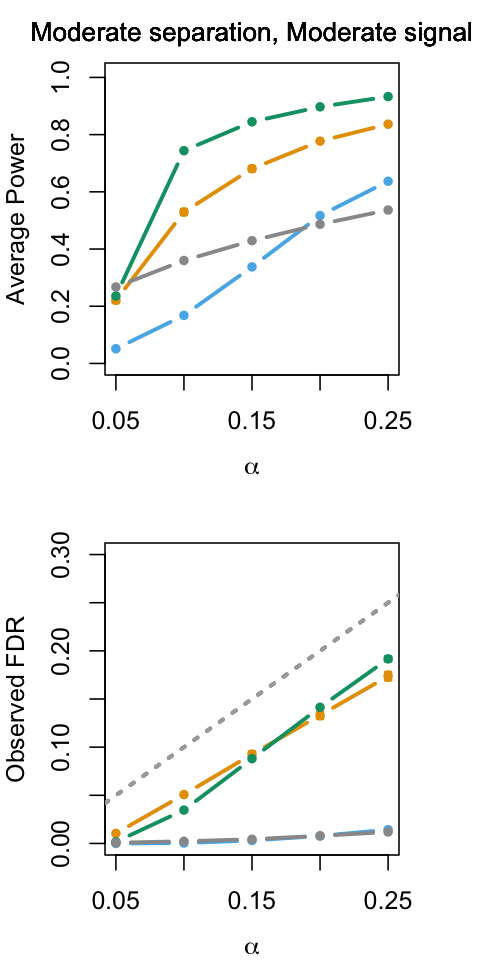}
  \includegraphics[width=0.3\textwidth]{./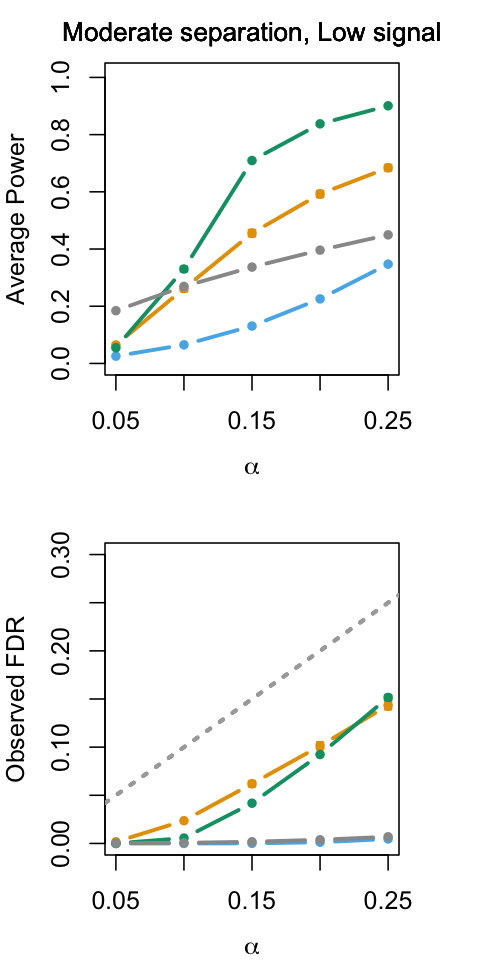}
  \caption{Effect of signal strength on stopping rule performance: Moderate separation regime. The effect of signal strength is qualitatively the same as in the perfect separation and good separation regimes. }
  \label{fig:cp_appendix_moderate}
\end{figure}

\begin{figure}[p]
  \centering
  \includegraphics[width=0.75\textwidth]{./figures/main_legend.pdf} \\
  \includegraphics[width=0.3\textwidth]{./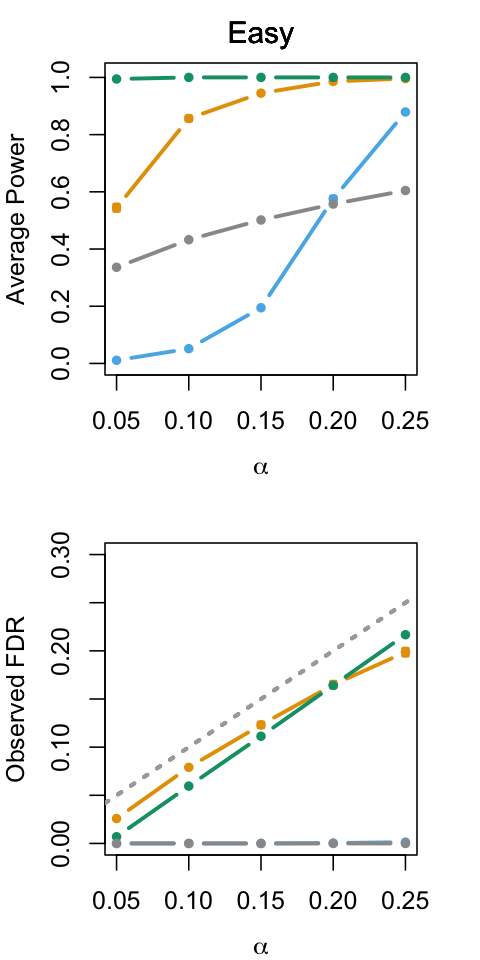} 
  \includegraphics[width=0.3\textwidth]{./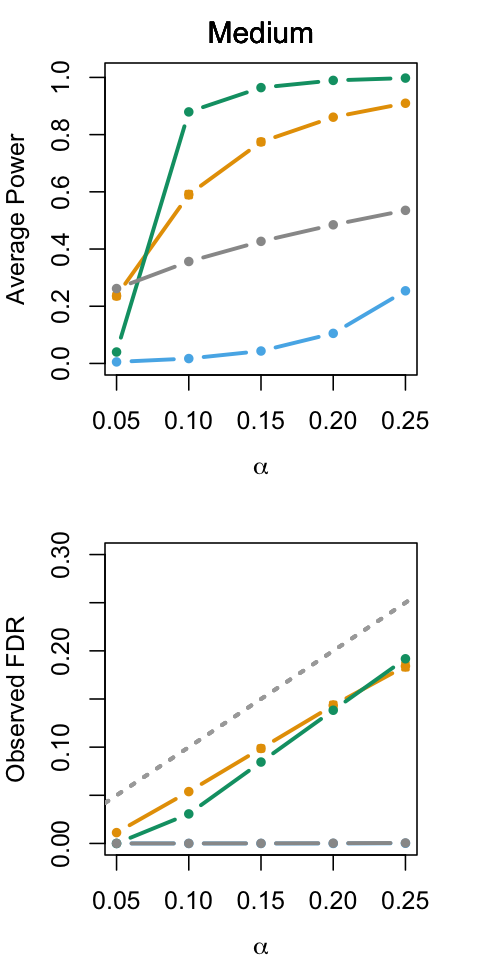}
  \includegraphics[width=0.3\textwidth]{./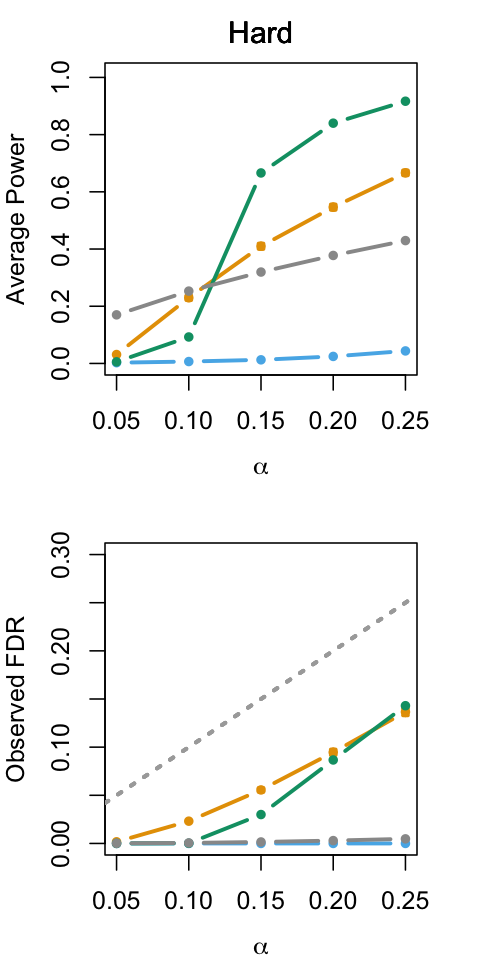}
  \caption{Effect of increasing the total number of hypotheses.  Instead of $100$ hypotheses of which $20$ are non-null, we consider $1000$ hypotheses of which $200$ are non-null.  With the exception of $\alpha$-thresholding, the performance of the methods remains largely unchanged.  One small change is that \emph{ForwardStop} loses power around $\alpha=0.1$ in the Hard setting.   The key difference is that the performance of $\alpha$-thresholding considerably degrades.  This is not surprising when we consider that $\alpha$-thresholding is simply a geometric random variable.  Thus as we increase the number of non-null hypotheses we expect the average power of $\alpha$-thresholding to drop to $0$.  }
  \label{fig:cp_appendix_diffn}
\end{figure}

\begin{figure}[p]
  \centering
  \includegraphics[width=0.75\textwidth]{./figures/main_legend.pdf} \\
  \includegraphics[width=0.3\textwidth]{./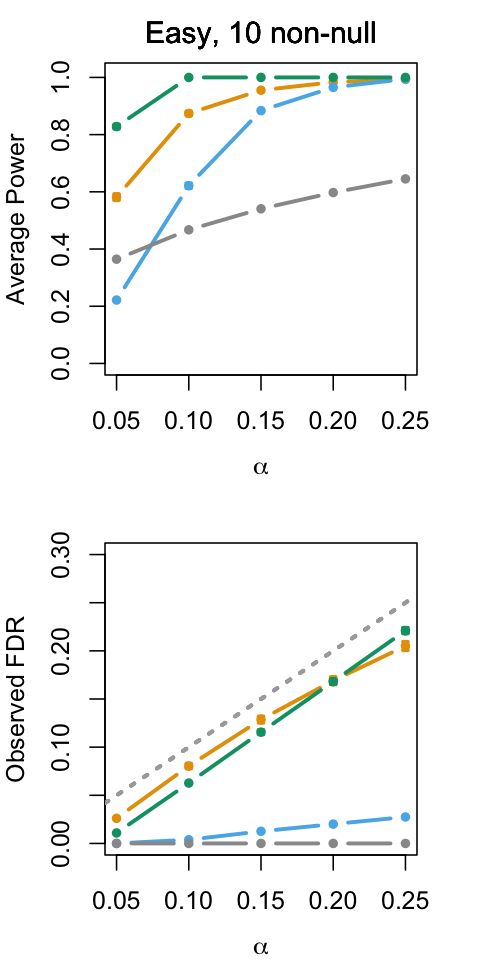} 
  \includegraphics[width=0.3\textwidth]{./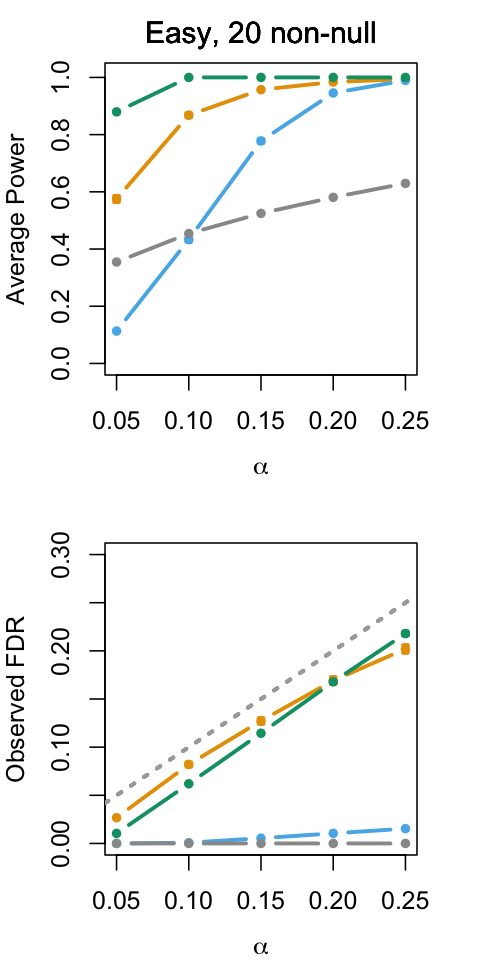}
  \includegraphics[width=0.3\textwidth]{./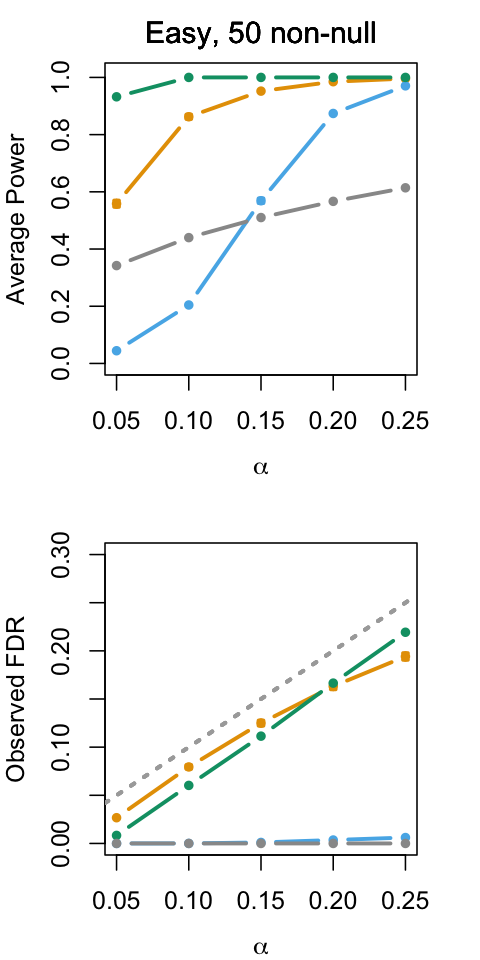}
  \caption{Effect of varying the number of non-nulls out of $m=100$ total hypotheses: Easy regime.  With the exception of $\alpha$-thresholding, the performance of the methods remains largely unchanged.  The performance of $\alpha$-thresholding degrades considerably as the number of non-null hypotheses increases.  An explanation for this behaviour is presented in \ref{fig:cp_appendix_diffn}.}
  \label{fig:cp_appendix_perfect_pi0}                        
\end{figure}                                                 
                                                             
\begin{figure}[p]                                          
  \centering                                                 
  \includegraphics[width=0.3\textwidth]{./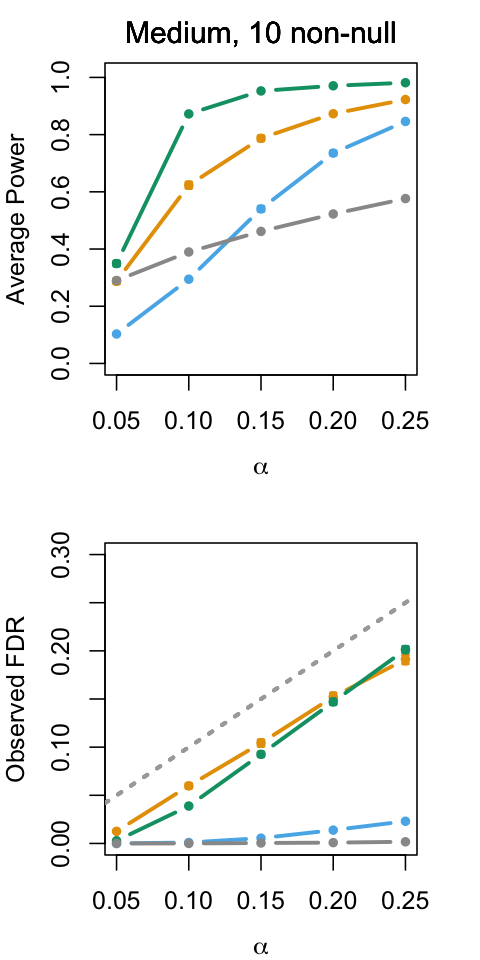} 
  \includegraphics[width=0.3\textwidth]{./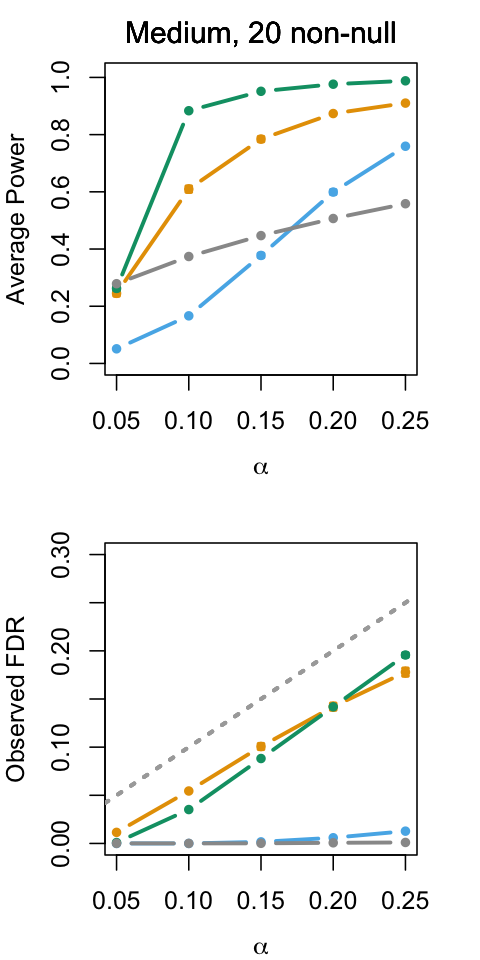}
  \includegraphics[width=0.3\textwidth]{./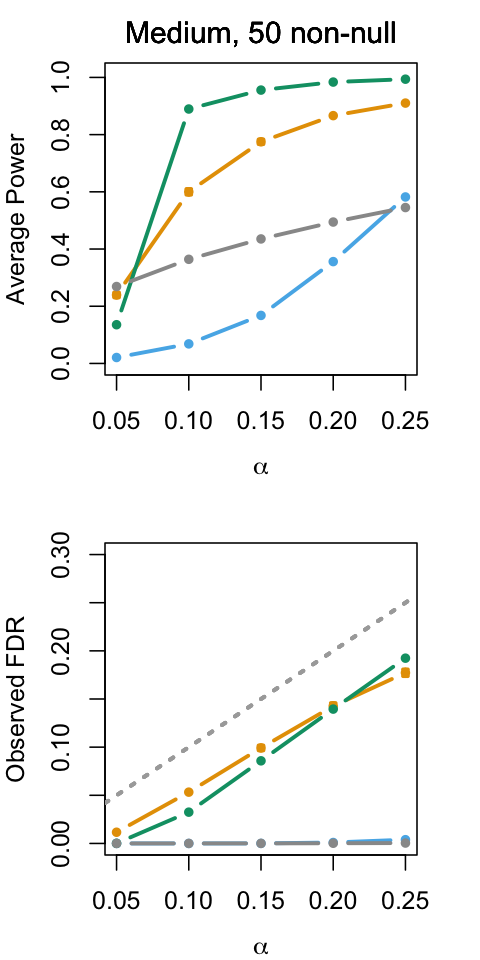}
  \caption{Effect of varying the number of non-nulls out of $m=100$ total hypotheses: Medium regime. The effect of varying the number of non-null hypotheses is qualitatively the same as in the Easy regime.}
  \label{fig:cp_appendix_good_pi0}                           
\end{figure}                                                 
                                                             
\begin{figure}[p]                                          
  \centering               
  \includegraphics[width=0.3\textwidth]{./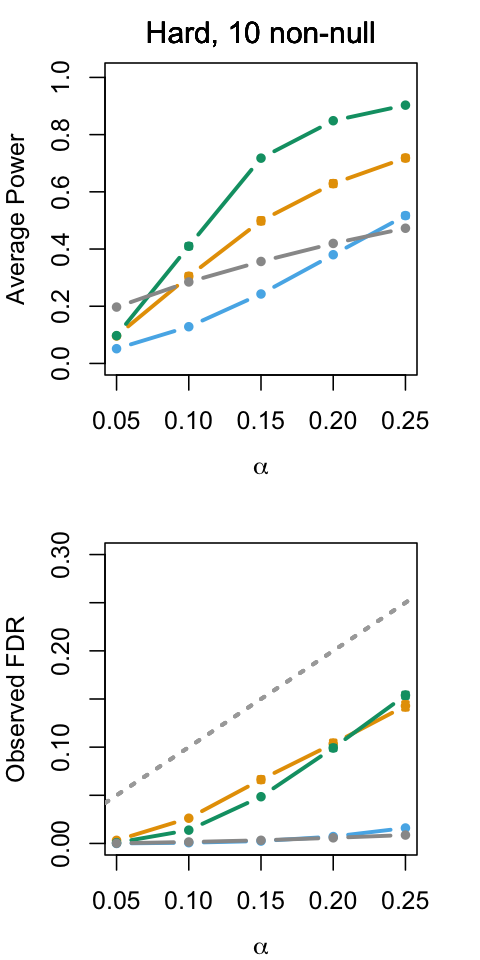} 
  \includegraphics[width=0.3\textwidth]{./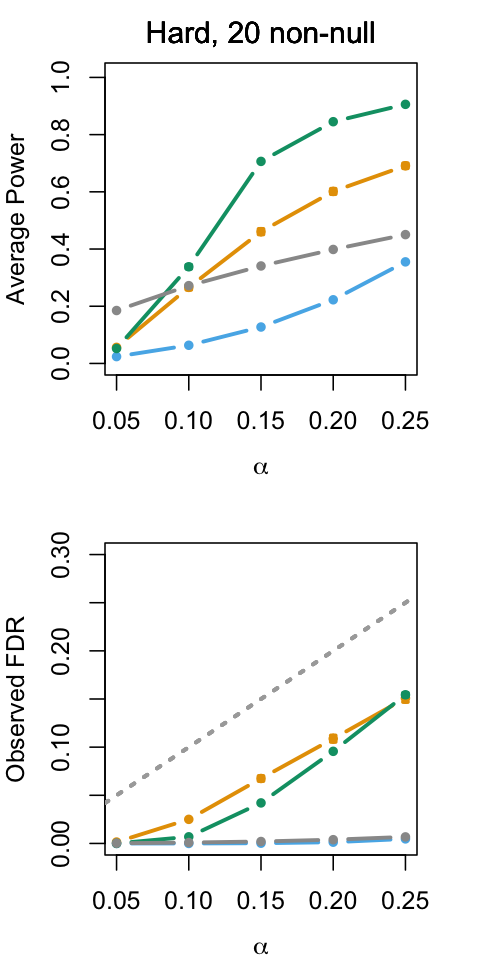}
  \includegraphics[width=0.3\textwidth]{./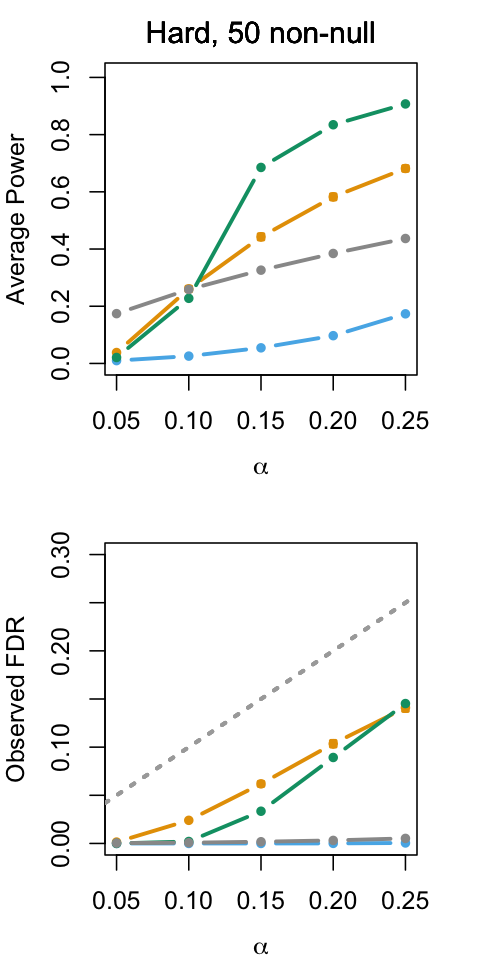}
  \caption{Effect of varying the number of non-nulls out of $m=100$ total hypotheses: Hard regime. The effect of signal strength is qualitatively the same as in the Easy and Medium regimes.}
  \label{fig:cp_appendix_moderate_pi0}
\end{figure}

\end{document}